\documentclass[reqno]{amsart}
\usepackage{amssymb,amsmath,amsfonts,amscd,amsthm}
\usepackage{amsbsy,bm}
\makeatletter
\@namedef{subjclassname@2020}{\textup{2020} Mathematics Subject Classification}
\makeatother
\usepackage[a4paper,textwidth=13.5cm,centering]{geometry}

\newtheorem{theorem}{Theorem}[section]
\newtheorem{definition}{Definition}[section]
\newtheorem{lemma}{Lemma}[section]
\newtheorem{proposition}{Proposition}[section]
\newtheorem{corollary}{Corollary}[section]
\newtheorem{remark}{Remark}[section]

\newtheorem{example}{Example}[section]

\newtheorem*{chernconjecture}{Chern conjecture}
\newtheorem*{calabiconjecture}{Calabi conjecture}
\newtheorem*{conjecture}{Centroaffine Bernstein conjecture}

\numberwithin{equation}{section}

\begin{document}
\title[Complete centroaffine extremal hypersurfaces]
{Complete centroaffine extremal hypersurfaces and
centroaffine Bernstein conjecture}

\author{Cheng Xing}
\address{
School of Mathematics and Statistics,
Henan Normal University, Xinxiang 453007,
People's Republic of China}
\email{xingcheng@nankai.edu.cn}

\author{Yalin Sun}
\address{
School of Mathematics and Statistics,
Henan Normal University, Xinxiang 453007,
People's Republic of China}
\email{ylsun2024@126.com}

\author{Ruiwei Xu}
\thanks{R. Xu is the corresponding author.}
\address{
School of Mathematics and Statistics,
Henan Normal University, Xinxiang 453007,
People's Republic of China}
\email{rwxu@htu.edu.cn}

\keywords{centroaffine Bernstein conjecture, centroaffine extremal
hypersurface, Calabi composition, complete hypersurface}

\subjclass[2020]{Primary 53A15; Secondary 53A20, 53C42}

\date{}

\begin{abstract}
The centroaffine Bernstein conjecture for hyperbolic centroaffine extremal hypersurfaces predicts that
nonnegative Ricci curvature together with either Euclidean or centroaffine completeness forces the
hypersurface to lie in Wang's class. We disprove it by systematically studying Calabi compositions:
we develop a general geometric method for constructing complete centroaffine extremal hypersurfaces
and apply it to construct two families that violate this rigidity. The first family is Euclidean complete
but has incomplete centroaffine metric, nonnegative Ricci curvature, and positive Gauss curvature in
dimension two; iterating the composition with a point gives examples in every dimension $n\ge2$.
The second family is flat and complete in both metrics in dimension three, and iteration yields
examples in every dimension $n\ge3$. Thus the centroaffine Bernstein conjecture fails under Euclidean
completeness for $n\ge2$ and under centroaffine completeness for $n\ge3$.
\end{abstract}

\maketitle

\section{Introduction}\label{sect:1}

The classical Bernstein problem asks whether every entire solution of the
minimal surface equation on $\mathbb R^n$ must be affine linear, or equivalently,
whether every entire minimal graph in $\mathbb R^{n+1}$ must be a hyperplane.
Bernstein \cite{Ber15} proved the assertion for $n=2$, and De Giorgi~\cite{DG65},
Almgren~\cite{Alm66}, and Simons~\cite{Sim68} extended it to $3\leq n\leq7$.
Bombieri--De Giorgi--Giusti \cite{BDGG69} constructed a nonaffine entire minimal
graph for $n=8$, giving counterexamples in all higher dimensions as well. These
results show that the rigidity of entire minimal graphs depends on dimension.
Related Bernstein problems aim to classify complete or stable solutions of other
geometric equations. For instance, Chodosh--Li \cite{ChL24} resolved the stable
Bernstein problem in dimension three, while Mooney--Yang~\cite{MY24} completed
the anisotropic Bernstein problem by constructing nonlinear entire graphs over
$\mathbb R^4$. For further developments in related directions, see \cite{AC00,ChL24,DJX24,HLW26,JWZ23,JX02,
LLY20,MY24,Sav09} and the references therein.

Affine maximal surfaces are stationary points of the unimodular affine area
functional. Chern \cite{Che79} assumed the entire graph condition, which implies
Euclidean completeness, whereas Calabi \cite{Cal82,Cal88} assumed affine
completeness. Both conjectures assert that an affine maximal surface must be an
elliptic paraboloid.

\begin{chernconjecture}
Any locally strongly convex affine maximal surface $F:\mathbb R^2\to\mathbb R^3$
that is an entire graph must be an elliptic paraboloid.
\end{chernconjecture}

\begin{calabiconjecture}
Any locally strongly convex affine maximal surface $F:M^2\to\mathbb R^3$ with
complete Blaschke metric must be an elliptic paraboloid.
\end{calabiconjecture}

Calabi \cite{Cal82,Cal88} developed the variational framework and obtained
rigidity under both completeness assumptions. Trudinger--Wang \cite{TW00} proved
the Chern conjecture, while Li--Jia \cite{LJ01} solved the Calabi conjecture.
Li--Jia \cite{LJ09} obtained related rigidity results for the affine maximal type
equation, giving an analytic proof of the Chern conjecture. Thus the
two-dimensional affine Bernstein problem is settled. For $n\geq3$, the general
smooth Bernstein problem remains open.

The Bernstein problem for affine maximal surfaces in affine geometry has a
natural centroaffine analogue. In this setting, Wang \cite{Wan94} introduced the
Tchebychev operator $\mathcal T:=\widehat\nabla T$ and derived the variational
equation $\operatorname{Trace}(\mathcal T)=0$ for the centroaffine area
functional. Locally strongly convex solutions of this equation are called
\emph{centroaffine extremal hypersurfaces}; the relevant definitions are recalled
in Section \ref{sect:2}. For such hypersurfaces, Li--Li--Simon \cite{LLS04}
formulated five Bernstein problems and the following conjecture.

\begin{conjecture}[\cite{LLS04}]
Let $F:M^n\to\mathbb R^{n+1}$ $(n\ge2)$ be a hyperbolic centroaffine extremal
hypersurface. Assume that $F(M^n)$ satisfies at least one of the following two
completeness conditions: the Riemannian metric $g$ induced by the Euclidean metric
on $\mathbb R^{n+1}$ is complete, or the centroaffine metric $h$ is complete. If the
Ricci curvature of $h$ is nonnegative, then $F(M^n)$ is centroaffinely equivalent to
one of the following hypersurfaces:
\begin{equation}\label{eqn:1.1}
x_1^{\alpha_1}x_2^{\alpha_2}\cdots x_{n+1}^{\alpha_{n+1}}=1,
\ \  \alpha_1,\ldots,\alpha_{n+1}>0.
\end{equation}
\end{conjecture}

Partial affirmative results were obtained under additional assumptions.
Liu--Wang \cite{LW95} proved the conjecture for surfaces with complete flat
centroaffine metric. Li--Li--Simon \cite{LLS04} proved it, under centroaffine
completeness, when the length of the Tchebychev vector field is constant,
or, in dimension two, bounded. In a complementary direction, Lei--Xu--Zhao~\cite{LXZ25}
introduced the type II Calabi composition and constructed examples that answer
the third and fifth Bernstein problems. Using this composition and direct
hyperbolic constructions, Sun--Xing--Xu~\cite{SXX26} produced further complete
examples that solved all five problems. The example obtained from the type II
Calabi composition is elliptic, whereas their hyperbolic examples have negative
Ricci curvature. Hence these constructions do not settle the conjecture.

Motivated by the conjecture and the above examples, we systematically study
Calabi compositions to develop geometric methods for constructing complete
centroaffine extremal hypersurfaces. The resulting hypersurfaces, generated from
centroaffine curves via Calabi compositions, possess nonnegative Ricci curvature
and the required completeness. Proposition \ref{pro:4.1} implies that if two
centroaffine curves are used to construct a centroaffine hypersurface as in
Example 1.2, then the resulting hypersurface must have dimension $n=3$.
Proposition \ref{pro:5.1} implies that if a hyperbolic extremal surface is
obtained by the Calabi composition of a curve with a point, then the curve is
locally centroaffinely equivalent to a member of Wang's class; see Proposition
\ref{pro:4.2}(1). In this case, the resulting surface lies precisely in Wang's
class (\ref{eqn:1.1}). These facts lead us to construct a Euclidean complete
centroaffine extremal surface with positive Gauss curvature, as
in Example 1.1.

\begin{example}\label{exa:1.1}
Let $Q$ be the centroaffine curve defined in \eqref{eqn:4.25}. For $u\in\mathbb
R$, set
\begin{equation}\label{eqn:1.2}
\begin{aligned}
\Phi(u)=\int_{-\infty}^{u}e^{-\zeta^2}\,d\zeta,\ \
w(u)=\tfrac{e^{-u^2}}{\Phi(u)},\ \
a(u)=2u+w(u),\ \
b(u)=\tfrac1{\Phi(u)}.
\end{aligned}
\end{equation}
For $v\in\mathbb R$, set $\gamma(v)=Q(v/\sqrt2)$. Then the smooth map
$F_2:\mathbb R^2\to\mathbb R^3$, defined by
\begin{equation}\label{eqn:1.3}
\begin{aligned}
F_2(u,v)
&=\bigl(a(u)\gamma(v),\,b(u)\bigr)\\
&=\bigl(a(u)(e^{-v^2/2}+\sqrt2\,v f(\tfrac{v}{\sqrt2})),\,
\tfrac{a(u)v}{\sqrt2},\,b(u)\bigr),
\end{aligned}
\end{equation}
is a locally strongly convex centroaffine surface.
\end{example}

\begin{theorem}\label{thm:1.1}
The map $F_2:\mathbb R^2\to\mathbb R^3$ constructed in Example \ref{exa:1.1}
is a smooth proper embedding and a hyperbolic centroaffine extremal surface. Its induced Euclidean metric $g$ is complete, whereas its
centroaffine metric $h$ is incomplete and has positive Gauss curvature.
\end{theorem}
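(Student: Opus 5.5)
The plan is to exploit the Calabi-composition structure $F_2=(a\gamma,b)$ and to reduce every claim to one-variable computations in $u$ or $v$.

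\textbf{Step 1: immersion, convexity and centroaffine data.} Using the Calabi composition formulas developed earlier in the paper (Sections 3--5), I will write the centroaffine metric as a warped-type metric
\[
h=\lambda(u)\,du^2+\mu(u)\,h_\gamma(v)\,dv^2 .
\]
Here $h_\gamma$ is the centroaffine metric of the curve $\gamma$, and $\lambda,\mu$ are explicit in $a,a',b,b'$. Reading $Q(t)=(e^{-t^2}+2tf(t),t)$ off \eqref{eqn:1.3}, we have $\det(Q,Q')=-e^{-t^2}\ne0$ and $\det(Q',Q'')$ of fixed sign. I also note $a>0$, since $2u+w(u)>0$ is the classical Mills-ratio inequality. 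These facts should give that $F_2$ is an immersion, that it is locally strongly convex, and that it is hyperbolic in the sense that the position vector points to the correct side.

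\textbf{Step 2: extremality.} The condition $\operatorname{Trace}(\mathcal T)=0$ for a Calabi composition splits into an equation for the curve and an ODE for $(a,b)$. For the curve, $\gamma$ must be the extremal curve of the relevant type, and this is exactly how $Q$ in \eqref{eqn:4.25} was chosen. For the ODE, I will use $\Phi'=e^{-u^2}$, $b'=-w\,b$, and $w'=-w(2u+w)=-wa$, and then verify the ODE by direct substitution. The rescaling $\gamma(v)=Q(v/\sqrt2)$ is designed to match the constants.

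\textbf{Step 3: embedding, properness and Euclidean completeness.} The function $b=1/\Phi$ is strictly decreasing from $+\infty$ as $u\to-\infty$ to $1/\sqrt\pi$ as $u\to+\infty$, so the third coordinate determines $u$. Since $a>0$, the second coordinate $a v/\sqrt2$ then determines $v$, which gives injectivity. For properness, take a sequence leaving every compact set in $\mathbb R^2$:
\begin{itemize}
\item if $u\to-\infty$, then $b\to\infty$;
\item if $u\to+\infty$, then $a\sim 2u\to\infty$, and the first coordinate of $\gamma$ is at least $e^{-v^2/2}>0$ (because $tf(t)\ge0$ when $f$ is the Gaussian-type primitive), so $|F_2|\to\infty$;
\item if $u$ stays bounded and $|v|\to\infty$, then $a$ is bounded below and $|av|\to\infty$.
\end{itemize}
A properly embedded closed submanifold of $\mathbb R^3$ has complete induced metric, so $g$ is complete.

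\textbf{Step 4: incompleteness and curvature of $h$.} For incompleteness, I will fix $v$ and look at the divergent curves $u\to\pm\infty$. Using the asymptotics $w\sim 2|u|+1/|u|$ as $u\to-\infty$ (so $a\sim 1/|u|$), and $w\to0$ super-exponentially as $u\to+\infty$, I expect $\int\sqrt{\lambda(u)}\,du$ to converge at one of the two ends. That end gives a divergent path of finite $h$-length. For the curvature, I will reparametrize by the $h$-arclengths $s=s(u)$ and $\sigma=\sigma(v)$, which puts the metric in the form $h=ds^2+\varphi(s)^2d\sigma^2$, and then use $K=-\varphi_{ss}/\varphi$. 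Positivity of $K$ reduces to a strict concavity inequality for $\sqrt{\mu}$ in $s$, that is, an inequality among $u$, $w$ and $a$.

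\textbf{Main obstacle.} I expect the hardest parts to be this final inequality and the finiteness of the length integral. Both require sharp Mills-ratio bounds such as $w(w+2u)<2$ together with the sign of $w'$. I would first try to express $K$ as a positive multiple of a polynomial in $u$ and $w$, and then apply the standard bounds
\[
\frac{2}{\sqrt{u^2+2}+u}<w<\frac{2}{\sqrt{u^2+4/\pi}+u}
\]
(in $u\to-u$ form).
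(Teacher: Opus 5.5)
The steps for immersion, extremality, injectivity, properness and incompleteness follow the paper's route. The step that carries the real difficulty, $K_h>0$, has a genuine gap. Your warped-product reduction itself is sound. With $h=\rho\,(du^2+\tfrac12dv^2)$ and $\rho=aw$ one gets $K_h=-(\log\rho)''/(2\rho)$. Using $w'=-\rho$, $a'=2-\rho$ and $\rho'=\tfrac{\rho}{a}(2-\rho-a^2)$, this becomes $K_h=r/(a^2\rho)$ with $r=a^2+2-\rho(a^2+1)$, which is exactly the paper's formula.

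What fails is proving $r>0$ by inserting two-sided Mills-ratio bounds into this polynomial in $(u,w)$. As $u\to-\infty$, the $u^{-2}$ and $u^{-4}$ terms of $r$ cancel and $r=\tfrac32u^{-6}+O(u^{-8})$. At fixed $u$, however, $\partial r/\partial w\approx 2u$. The best bound of the type you quote is $w<-u+\sqrt{u^2+2}=\tfrac{2}{\sqrt{u^2+2}+u}$, and it overestimates $w$ by about $\tfrac12|u|^{-3}$. Substituting it therefore shifts $r$ by about $-|u|^{-2}$ and produces a negative number. A purely bound-based proof would need $w$ to within $o(|u|^{-7})$ near $-\infty$. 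Your display is also misstated: its left inequality already fails at $u=0$, where $w=2/\sqrt\pi<\sqrt2$. In fact $\tfrac{2}{\sqrt{u^2+2}+u}$ is an \emph{upper} bound for $w$.

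The missing idea is to use the ODE structure rather than global inequalities. From the relations above,
$r'=2a(1-\rho)(2-\rho)-\tfrac{\rho}{a}(2-\rho-a^2)(1+a^2)$.
At any zero of $r$ one has $\rho=\tfrac{a^2+2}{a^2+1}$, and there this equals $\tfrac{a^5(a^2+3)}{(a^2+1)^2}>0$. So $r$ can only cross zero upwards. The paper combines this with $r\to0^+$ as $u\to-\infty$, obtained from the asymptotic series of $\Phi$ up to its fifth term. A smallest zero would then have $r>0$ on its left and $r'>0$ at it, which is impossible.

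Once you have this crossing lemma, your bound $w(w+2u)<2$ (that is, $\rho<2$, which is true) would even replace the high-order expansion. A zero $u_0$ would be unique, with $r<0$ on $(-\infty,u_0)$, so $\log\rho$ would be strictly convex there. Since $\rho\to2$ as $u\to-\infty$, this forces $(\log\rho)'>0$ and hence $\rho>2$ on $(-\infty,u_0)$, a contradiction.

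There are also some smaller points to fix.
\begin{itemize}
\item In the properness case $u\to+\infty$, the lower bound $e^{-v^2/2}$ on the first coordinate of $\gamma$ is not uniform in $v$. Use instead $q(\xi)=e^{-\xi^2}+2\xi f(\xi)\ge q(0)=1$, which holds because $q''>0$ and $q'(0)=0$, or bring in the second coordinate.
\item In Step 2, $Q$ is not an extremal curve: $\Delta_{h_Q}\log\psi_Q=2$. Extremality comes from $\psi=\tfrac14e^{v^2-2u^2}$, whose $u$- and $v$-parts contribute $-4$ and $+4$ to $\rho\,\Delta_h=\partial_u^2+2\partial_v^2$.
\item Propositions~\ref{pro:4.1} and~\ref{pro:5.1} do not cover the non-exponential scalings $a(u),b(u)$. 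The metric $h$ must be read off directly from $Z_{uu}=-wZ_u+\rho Z$ and $Z_{vv}=\tfrac a2Z_u-vZ_v+\tfrac\rho2Z$.
\item Incompleteness needs no sharp bounds. As $u\to+\infty$, $\rho\sim\tfrac{2u}{\sqrt\pi}e^{-u^2}$, which makes $\int^\infty\sqrt\rho\,du$ finite.
\end{itemize}
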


\begin{remark}\label{rem:1.1}
Geometrically, $F_2$ is obtained by rescaling the centroaffine curve $\gamma$
along a transverse direction. The functions $a(u),b(u)$ are chosen so that the centroaffine metric $h$
is conformal to the Euclidean metric $du^2+dv^2/2$ and $\log\psi=-\log4-2u^2+v^2$, so that the quadratic
terms cancel under $\Delta_h$.
\end{remark}

In dimension three, the following construction yields a counterexample with a
flat centroaffine metric and with both metrics complete. It also serves as the
starting point for a higher-dimensional family.

\begin{example}\label{exa:1.2}
Let $P,Q:\mathbb R\to\mathbb R^2$ be the centroaffine curves defined in
\eqref{eqn:4.24} and \eqref{eqn:4.25}, respectively. Then the smooth map
$F_3:\mathbb R^3\to\mathbb R^4$, defined by
\begin{equation}\label{eqn:1.4}
\begin{aligned}
F_3(t,u,v)
&=\bigl(e^tP(u),\,e^{-t}Q(v)\bigr)\\
&=\bigl(e^{t+u^2},\,e^{t+u^2}f(u),\,
e^{-t}(e^{-v^2}+2vf(v)),\,e^{-t}v\bigr),
\end{aligned}
\end{equation}
is a locally strongly convex centroaffine hypersurface.
\end{example}

\begin{theorem}\label{thm:1.2}
The map $F_3:\mathbb R^3\to\mathbb R^4$ constructed in Example \ref{exa:1.2}
is a smooth proper embedding and a hyperbolic centroaffine extremal hypersurface. Its centroaffine metric $h$ is flat and complete,
and its induced Euclidean metric $g$ is also complete.
\end{theorem}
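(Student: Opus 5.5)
The plan is to treat $F_3$ as a Calabi composition of the two centroaffine curves $P$ and $Q$, use the composition formulas of Section~\ref{sect:3} to reduce every centroaffine invariant to data of $P$ and $Q$, and then verify the Euclidean claims directly from the explicit components in \eqref{eqn:1.4}.

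\textbf{Step 1 (curve data).} Reading off \eqref{eqn:1.4}, $P(u)=e^{u^2}(1,f(u))$ and $Q(v)=(e^{-v^2}+2vf(v),v)$. The identity $Q'=(2f(v),1)$ forces $f'(v)=e^{-v^2}$, and I take $f(v)=\int_0^v e^{-s^2}ds$ as the normalization in \eqref{eqn:4.24}--\eqref{eqn:4.25}. Direct computation gives
\begin{equation*}
P'=2uP+(0,1), \quad \det(P,P')=e^{u^2}, \quad P''=2P+2uP',
\end{equation*}
and
\begin{equation*}
\det(Q,Q')=e^{-v^2}, \quad Q''=(2e^{-v^2},0), \quad \det(Q',Q'')=-2e^{-v^2}.
\end{equation*}
Hence both curves have constant centroaffine metric, a constant multiple of $du^2$ and $dv^2$ respectively. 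Their remaining centroaffine invariant, the logarithmic derivative of $\det(P,P')$ or $\det(Q,Q')$, is linear: $2u$ and $-2v$.

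\textbf{Step 2 (centroaffine invariants of $F_3$).} Insert these data into the Calabi composition formulas of Section~\ref{sect:3} for $(e^{t}P,e^{-t}Q)$. I expect $h=c_0\,dt^2+c_1\,du^2+c_2\,dv^2$ with positive constants. Then $h$ is flat, and since $(t,u,v)$ range over all of $\mathbb R^3$ it is complete, being isometric to Euclidean space. The signs of the $c_i$, together with the sign convention for the position-vector normal, give local strong convexity and hyperbolicity.

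For extremality, I use that the Tchebychev form is $T=\tfrac1{2n}d\log\psi$, where $\psi$ is built from $\det(F_t,F_u,F_v,F)^2/\det h$. We have $\det(F_t,F_u,F_v,F)=\pm2\det(P,P')\det(Q,Q')=\pm2e^{u^2-v^2}$, so $\log\psi=\alpha u^2+\beta v^2+\mathrm{const}$. Then $\operatorname{Trace}\mathcal T=\tfrac1{2n}\Delta_h\log\psi=\tfrac1n(\alpha/c_1+\beta/c_2)$. The main computational obstacle is checking that this constant vanishes. The opposite signs of $u^2$ and $v^2$ are exactly what make this plausible, and it should match the criterion of Proposition~\ref{pro:4.1}, which I would invoke rather than redo the bookkeeping.

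\textbf{Step 3 (embedding, properness, Euclidean completeness).} For injectivity, $x_1=e^{t+u^2}>0$ and $x_2/x_1=f(u)$ determine $u$ because $f$ is strictly increasing. Then $t$ follows from $x_1$, and $v=e^{t}x_4$. Immersion follows from the nonvanishing of $\det(F_t,F_u,F_v,F)$.

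For properness, note that $e^{-v^2}+2vf(v)\ge c(1+|v|)$ for some $c>0$, since this function is positive and asymptotic to $\sqrt\pi|v|$. Take a divergent sequence in $\mathbb R^3$.
\begin{itemize}
\item If $t\to+\infty$, then $x_1\ge e^{t}\to\infty$.
\item If $t\to-\infty$, then $x_3\ge ce^{-t}\to\infty$.
\item If $t$ stays bounded, then either $|u|\to\infty$, forcing $x_1\to\infty$, or $|v|\to\infty$, forcing $x_3\to\infty$.
\end{itemize}
Thus $F_3$ is a proper injective immersion, hence a closed embedded submanifold of $\mathbb R^4$. The Euclidean induced metric $g$ on a properly embedded submanifold is complete, since bounded closed sets are compact by Heine--Borel and properness.
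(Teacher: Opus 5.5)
Your overall route is the paper's. The paper computes the Gauss formula of $F_3$ directly ($Z_{tt}=Z$, $Z_{uu}=Z_t+2uZ_u+Z$, $Z_{vv}=-Z_t-2vZ_v+Z$), and this is exactly the Calabi-composition bookkeeping of Proposition~\ref{pro:4.1} with $\lambda=1$ that you want to invoke. Those formulas are in Section~\ref{sect:4}, not Section~\ref{sect:3}. Your Step~3 is sound. Your injectivity argument (recover $u$ from $x_2/x_1$, then $t$ from $x_1$, then $v=e^tx_4$) is a little shorter than the paper's, which uses the monotonicity of $v/q(v)$. Your ``proper implies complete'' argument is the converse half of Lemma~\ref{lem:proper-complete}.

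The one real gap is that Step~2 never establishes extremality, which is the heart of the theorem. You leave $c_0,c_1,c_2$ unspecified and only call the vanishing of $\alpha/c_1+\beta/c_2$ ``plausible''. Opposite signs of $u^2$ and $v^2$ are not enough: what is needed is $c_1=c_2$, and that depends on $\lambda=1$. For $(e^tP,e^{-\lambda t}Q)$ with general $\lambda>0$, one gets $\alpha/c_1+\beta/c_2=(\lambda+1)(\lambda^{-1}-1)$, which vanishes only at $\lambda=1$. So this is a genuine check, not routine bookkeeping.

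Your own Step~1 data close the gap at once:
\begin{itemize}
\item The relations $P''=2uP'+2P$ and $Q''=(2e^{-v^2},0)=-2vQ'+2Q$ give $h_P=2\,du^2$ and $h_Q=2\,dv^2$.
\item Then \eqref{eqn:4.2} with $\lambda=1$ gives $h=dt^2+du^2+dv^2$, so $c_0=c_1=c_2=1$.
\item Your determinant $\pm2e^{u^2-v^2}$ gives $\alpha=2$ and $\beta=-2$ in your convention, hence $\alpha/c_1+\beta/c_2=0$.
\end{itemize}
Equivalently, $\Delta_{h_P}\log\psi_P=-2$ and $\Delta_{h_Q}\log\psi_Q=2$ satisfy criterion (5) of Proposition~\ref{pro:4.1} with $c=-2$, which forces $\lambda=1$. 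With these lines written out, your proof is complete.
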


\begin{remark}\label{rem:1.2}
The complete curves $P$ and $Q$ respectively satisfy $\Delta_{h_P}\log\psi_P
=-2$ and $\Delta_{h_Q}\log\psi_Q=2$. Moreover, their Calabi composition with
$\lambda=1$ cancels these contributions and gives $h=dt^2+du^2+dv^2$. Hence
the resulting hypersurface is extremal with flat complete centroaffine metric,
and properness gives Euclidean completeness.
\end{remark}

Now we extend the above two constructions to higher dimensions using the
Calabi composition with a point. Theorems \ref{thm:1.3} and \ref{thm:1.4} follow
by repeatedly Calabi composing $F_2$ and $F_3$, respectively, with a point and
taking $\lambda=1$. Proposition \ref{pro:5.1} preserves extremality, proper
embedding, Euclidean completeness, and the completeness type of the centroaffine
metric. It also preserves the nonnegative Ricci curvature of $F_2$ and the
flatness of $F_3$.

Starting with the surface in Example \ref{exa:1.1}, Proposition \ref{pro:5.1}
yields Euclidean complete hyperbolic centroaffine extremal hypersurfaces whose
centroaffine metrics have nonnegative Ricci curvature.

\begin{theorem}\label{thm:1.3}
Let $G_2=F_2:\mathbb R^2\to\mathbb R^3$ be the smooth map defined in Example
\ref{exa:1.1}. For $n\ge3$, write $\mathbb R^n=\mathbb R\times\mathbb R^{n-1}$
with coordinates $(r_n,p)$. Then the smooth map $G_n:\mathbb R^n\to\mathbb R^{n+1}$,
defined recursively by
\begin{equation}\label{eqn:1.5}
G_n(r_n,p)=(e^{r_n}G_{n-1}(p),\,e^{-r_n}),
\end{equation}
is a locally strongly convex smooth proper embedding and a hyperbolic centroaffine extremal hypersurface.
Its centroaffine metric $h$ has nonnegative Ricci curvature but is incomplete, whereas its induced
Euclidean metric $g$ is complete.
\end{theorem}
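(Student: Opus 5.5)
The plan is to argue by induction on $n$, with Theorem \ref{thm:1.1} as the base case $n=2$. The inductive step will come entirely from Proposition \ref{pro:5.1}, which concerns the Calabi composition of a centroaffine hypersurface with a point. As announced in the introduction, this proposition preserves the following properties: local strong convexity, the hyperbolic centroaffine extremal condition, proper embedding, Euclidean completeness, the completeness type of the centroaffine metric, and nonnegativity of the Ricci curvature. The key check is that the recursion \eqref{eqn:1.5} is exactly that composition with parameter $\lambda=1$. Writing $G_n(r,p)=(e^{\lambda r}G_{n-1}(p),e^{-\mu r})$ with the normalization of Section \ref{sect:5}, I would confirm that the exponents $(1,-1)$ satisfy the constraint that Proposition \ref{pro:5.1} imposes for extremality. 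For a hyperbolic composite with a point, the constraint is roughly that the weights produce the centroaffine metric $h_n=c\,dr^2+h_{n-1}$ with a constant $c>0$ and no cross term. I would then verify that the Tchebychev-type function $\log\psi_n$ differs from $\log\psi_{n-1}$ by an affine function of $r$. That affine term is annihilated by $\Delta_{h_n}$, so $\operatorname{Trace}(\mathcal T)=0$ is inherited.

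Given this product structure $h_n=c\,dr_n^2+h_{n-1}$, the curvature statements are immediate. The Ricci tensor of a Riemannian product with a flat line factor is $0\oplus\mathrm{Ric}_{h_{n-1}}$, so it is nonnegative whenever $\mathrm{Ric}_{h_{n-1}}\ge0$. For $n=2$ this holds by Theorem \ref{thm:1.1}, since the Gauss curvature is positive. Incompleteness also passes to the product. A geodesic of $h_{n-1}$ that leaves every compact set in finite length, lifted to the slice $r_n=0$, is still a finite-length divergent curve in $h_n$. Hence $h_n$ is incomplete by induction from the incompleteness of $h$ in Theorem \ref{thm:1.1}.

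For the Euclidean and topological properties, I would check directly that $G_n$ is a proper injective immersion; Proposition \ref{pro:5.1} should already package this. Injectivity holds because the last coordinate determines $r_n$, and then $G_{n-1}(p)=e^{-r_n}(\text{first } n \text{ coordinates})$ determines $p$ by the injectivity of $G_{n-1}$. For properness, suppose $G_n(r_n^k,p^k)$ stays bounded. The coordinate $e^{-r_n}$ bounds $r_n$ from above. A lower bound on $r_n$ needs that $|G_{n-1}|$ is bounded away from zero, which I would verify inductively. Indeed $G_{n-1}$ is a proper map whose image avoids the origin, being a centroaffine hypersurface with position vector transversal. Its image is therefore closed and misses $0$, so $|G_{n-1}|\ge\delta>0$, and boundedness of $e^{r_n}|G_{n-1}|$ bounds $r_n$ from below. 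Once $r_n$ is bounded, bounding $G_{n-1}(p^k)$ together with properness of $G_{n-1}$ confines $p^k$ to a compact set. A properly embedded closed submanifold of $\mathbb R^{n+1}$ has complete induced metric, so $g$ is complete.

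The main obstacle I expect is the first step: confirming that the specific exponents $(1,-1)$ fall within the extremal range of Proposition \ref{pro:5.1}, and that the sign conventions keep the composite hyperbolic and locally strongly convex rather than elliptic. If the proposition's hypotheses are stated only for $\lambda=1$ with this normalization, this reduces to bookkeeping. Otherwise I would compute $h_n$ and $\psi_n$ directly from the composite. The position vector and $\partial_{r}G_n=(G_{n-1},0)-(0,\dots,0,e^{-r})\cdot(\text{scaling})$ are explicit, and I would verify $\Delta_{h_n}\log\psi_n=\Delta_{h_{n-1}}\log\psi_{n-1}+(\text{const}\cdot r)''=0$.
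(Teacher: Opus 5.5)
Your proposal is correct and follows the paper's proof. You argue by induction from Theorem \ref{thm:1.1} via Proposition \ref{pro:5.1} with $\lambda=1$; that proposition holds for every $\lambda>0$, so there is no extremality constraint on the exponents to check. Nonnegative Ricci curvature comes from the product structure of $h_n$, and the remaining properties (extremality, incompleteness of $h$, embedding, properness, Euclidean completeness) are inherited exactly as in assertions $(2)$ and $(5)$--$(8)$.

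There are two harmless slips. For $\lambda=1$ the metric is $h_n=dr_n^2\oplus\tfrac12 h_{n-1}$, not $c\,dr_n^2+h_{n-1}$. In the properness argument the directions are swapped: the last coordinate $e^{-r_n}$ bounds $r_n$ from below, and $|G_{n-1}|\ge\delta$ bounds it from above. Neither slip affects the conclusions.
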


Starting instead with the hypersurface in Example~\ref{exa:1.2}, the same
composition preserves flatness and both completeness properties, providing
the following family.

\begin{theorem}\label{thm:1.4}
Let $F_3:\mathbb R^3\to\mathbb R^4$ be the smooth map defined in Example
\ref{exa:1.2}. For $n\ge4$, write $\mathbb R^n=\mathbb R\times\mathbb R^{n-1}$
with coordinates $(s_n,p)$. Then the smooth map $F_n:\mathbb R^n\to\mathbb
R^{n+1}$, defined recursively by
\begin{equation}\label{eqn:1.6}
F_n(s_n,p)=(e^{s_n}F_{n-1}(p),\,e^{-s_n}),
\end{equation}
is a locally strongly convex smooth proper embedding and a hyperbolic centroaffine extremal hypersurface. Its centroaffine metric
$h$ is flat and complete, and its induced Euclidean metric $g$ is also complete.
\end{theorem}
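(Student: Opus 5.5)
The argument is an induction on $n$, with base case Theorem \ref{thm:1.2} for $F_3$. The inductive step is Proposition \ref{pro:5.1}, applied to the Calabi composition of $F_{n-1}$ with a point at $\lambda=1$. For this I will assume Proposition \ref{pro:5.1} in the form announced in the introduction. If $F:M^{m}\to\mathbb R^{m+1}$ is a locally strongly convex hyperbolic centroaffine extremal hypersurface, then the composition
\[
\tilde F(s,p)=(e^{s}F(p),e^{-s})
\]
is again locally strongly convex, hyperbolic and centroaffine extremal. Its centroaffine metric splits, up to a constant factor depending only on $m$ and $\lambda$, as $\tilde h=c_1\,ds^2+c_2\,h$. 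Finally, proper embedding and Euclidean completeness pass from $F$ to $\tilde F$.

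The key steps, in order, are as follows.

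(i) \emph{Extremality, convexity and hyperbolicity.} These are direct outputs of Proposition \ref{pro:5.1}, applied with $F=F_{n-1}$ and $\lambda=1$.

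(ii) \emph{Metric.} The metric $\tilde h$ is a Riemannian product of the line $(\mathbb R,c_1ds^2)$ with $(\mathbb R^{n-1},c_2h)$. A product of flat metrics is flat, so flatness is inherited. A product of complete metrics is complete, so centroaffine completeness is inherited as well. Iterating from $n=3$ therefore gives a flat complete metric $h$ on $\mathbb R^n$ for every $n\ge4$.

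(iii) \emph{Proper embedding.} Injectivity holds because $e^{-s}$ determines $s$, and then $F_{n-1}(p)=e^{-s}\cdot(\text{first block})$ determines $p$. The differential is injective because the $s$-derivative $(e^sF,-e^{-s})$ is transverse to $(e^s dF,0)$; this uses $F\notin\operatorname{im}dF$, which is the centroaffine transversality. For properness, suppose $|\tilde F(s_k,p_k)|$ stays bounded. Then $e^{-s_k}$ is bounded, so $s_k$ is bounded below. Since $F_{n-1}$ takes values in the open convex region where the position vector stays away from the origin, $|F_{n-1}|$ is bounded below by a positive constant, and hence $e^{s_k}$ is bounded as well. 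It follows that $|F_{n-1}(p_k)|$ is bounded, so $p_k$ is bounded by properness of $F_{n-1}$.

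(iv) \emph{Euclidean completeness.} A properly embedded closed submanifold of $\mathbb R^{n+1}$ with the induced metric is complete, by Hopf--Rinow applied to closed bounded sets.

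The main obstacle is the properness step. It needs a uniform positive lower bound for $|F_{n-1}|$, which must itself be propagated inductively. For $F_3$ this follows from the explicit formula: either $|e^{t}P(u)|$ or $|e^{-t}Q(v)|$ is large unless $t$ is bounded, and $P,Q$ stay away from the origin. In general I would strengthen the inductive hypothesis to include $|F_m|\ge c_m>0$. The induction is easy: if $e^{-s}<c$ then $e^{s}>1/c$, so $|e^{s}F_{m}|\ge c_m/c$. This gives $|F_{m+1}|\ge\min(c,\,c_m/c)>0$. With that bound in hand, everything else is a formal consequence of Proposition \ref{pro:5.1} and Theorem \ref{thm:1.2}.
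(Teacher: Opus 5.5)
Your proposal is correct and follows the paper's proof. It runs the same induction from $F_3$ (Theorem \ref{thm:1.2}), with the inductive step supplied by Proposition \ref{pro:5.1} at $\lambda=1$, whose items (1)--(2), (5), (6)--(7) and (8) give flatness and completeness of $h$, extremality, proper embedding and Euclidean completeness. Your strengthened hypothesis $|F_m|\ge c_m>0$ is harmless but unnecessary: properness together with $F_{n-1}\neq0$ (centroaffine transversality) already forces such a bound, which is how the paper argues in the proof of Proposition \ref{pro:5.1}(7).
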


As noted in Section \ref{sect:5.4}, Wang's class has a flat centroaffine metric
and $\mathcal T=0$ \cite[Example 3.3]{LLS04}. In contrast, the family
$\{G_n\}_{n\ge2}$ is nonflat, while $\{F_n\}_{n\ge3}$ satisfies
$\mathcal T\ne0$ by
\eqref{eqn:5.28}. This contrast yields the following disproof of the centroaffine
Bernstein conjecture.

\begin{corollary}\label{cor:1.1}
The centroaffine Bernstein conjecture fails under Euclidean completeness for every $n\ge2$ and under
centroaffine completeness for every $n\ge3$. More precisely, in the first family, the metric $h$ is
incomplete with nonnegative Ricci curvature, and its Gauss curvature is positive when $n=2$;
in the second family, $h$ is flat and both metrics are complete.
\end{corollary}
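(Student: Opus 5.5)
The corollary follows by combining Theorems \ref{thm:1.1}--\ref{thm:1.4} with a check that none of the examples is centroaffinely equivalent to a member of Wang's class (\ref{eqn:1.1}). The plan is to first isolate two centroaffine invariants of Wang's class and then show that each family violates one of them.

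\textbf{Invariants of Wang's class.} By \cite[Example 3.3]{LLS04}, every hypersurface (\ref{eqn:1.1}) has flat centroaffine metric $h$ and vanishing Tchebychev operator $\mathcal T=0$. Both $h$ and $\mathcal T=\widehat\nabla T$ are centroaffine invariants: they are preserved under the action of $GL(n+1,\mathbb R)$ and under reparametrization. Hence any hypersurface centroaffinely equivalent to Wang's class, even locally, must have flat $h$ and $\mathcal T=0$.

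\textbf{Euclidean completeness, $n\ge2$.} Use the family $\{G_n\}$. By Theorem \ref{thm:1.1} and Theorem \ref{thm:1.3}, each $G_n$ has the following properties:
\begin{itemize}
\item it is a hyperbolic centroaffine extremal hypersurface;
\item its Euclidean metric $g$ is complete;
\item its centroaffine metric $h$ has nonnegative Ricci curvature.
\end{itemize}
So $G_n$ satisfies every hypothesis of the conjecture. It remains to show that $h$ is not flat.
\begin{itemize}
\item For $n=2$ this is immediate, since Theorem \ref{thm:1.1} gives positive Gauss curvature.
\item For $n\ge3$, Proposition \ref{pro:5.1} expresses $h$ for a Calabi composition with a point, with $\lambda=1$. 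I expect this to be a warped or product-type structure, so that the curvature of the factor $G_{n-1}$ survives in $G_n$. Concretely, some sectional curvature along planes tangent to the $G_{n-1}$-directions stays nonzero.
\end{itemize}
By induction, $h$ is nonflat for all $n\ge2$, so $G_n$ is not in Wang's class. Incompleteness of $h$ is part of Theorems \ref{thm:1.1} and \ref{thm:1.3}.

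\textbf{Centroaffine completeness, $n\ge3$.} Use the family $\{F_n\}$. By Theorem \ref{thm:1.2} and Theorem \ref{thm:1.4}, each $F_n$ has the following properties:
\begin{itemize}
\item it is a hyperbolic centroaffine extremal hypersurface;
\item its centroaffine metric $h$ is flat and complete, so in particular it has nonnegative Ricci curvature;
\item its Euclidean metric $g$ is complete.
\end{itemize}
Flatness does not distinguish $F_n$ from Wang's class, so we use the Tchebychev operator instead. By \eqref{eqn:5.28}, $\mathcal T\ne0$ for $F_n$.

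The main point to verify is that \eqref{eqn:5.28} indeed holds for every $n\ge3$. For the base case $F_3$, I would compute $T$ directly from $\log\psi=\log\psi_P+\log\psi_Q+(\text{linear in }t)$ in the flat coordinates $(t,u,v)$. Since $\Delta_{h_P}\log\psi_P=-2$, the Hessian of $\log\psi$ is nonzero, so $\mathcal T\ne0$. For the inductive step, Proposition \ref{pro:5.1} adds only a term linear in $s_n$ to $\log\psi$, so the nonzero block of $\mathcal T$ persists.

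Hence $F_n$ is not centroaffinely equivalent to (\ref{eqn:1.1}), which completes the argument.
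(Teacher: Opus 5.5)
Your proposal is correct and follows the paper's own argument. Wang's class is shown to be flat with $\mathcal T=0$; the family $G_n$ is excluded by nonflatness, which is inherited because \eqref{eqn:5.2} gives the exact Riemannian product $h_n=dr_n^2\oplus\tfrac12h_{n-1}$ (so no hedging about a warped structure is needed). The flat family $F_n$ is excluded by $\mathcal T_{F_n}\ne0$, and your Hessian check via $h(\mathcal T X,Y)=-\tfrac1{2n}\operatorname{Hess}_h\log\psi(X,Y)$ from \eqref{eqn:2.14} is precisely the computation behind \eqref{eqn:5.28}.
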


Finally, we note that the case of a centroaffine complete hyperbolic extremal surface with
nonnegative Gauss curvature remains open. In a forthcoming paper, we will investigate
whether such a surface must belong to Wang's class; we expect this to be true.

The paper is organized as follows. Section \ref{sect:2} recalls the centroaffine
structure equations, Calabi compositions, and the relevant completeness notions.
In Section \ref{sect:3}, we explain the geometric construction of $F_2$ and
establish its curvature, extremality, embedding, and completeness properties,
thereby proving Theorem \ref{thm:1.1}. Section \ref{sect:4} develops the
two-factor composition, classifies the centroaffine curves needed to construct
$F_3$, and proves Theorem \ref{thm:1.2}. Finally, Section \ref{sect:5} studies
composition with a point and establishes its inheritance properties. Iteration
yields Theorems \ref{thm:1.3} and \ref{thm:1.4}, and the comparison with Wang's
class proves Corollary \ref{cor:1.1}.

\section{Preliminaries}\label{sect:2}

In this section, we give a brief review of the local theory of centroaffine
hypersurfaces in affine differential geometry, and recall some results on
such hypersurfaces and Riemannian manifolds. We assume throughout that
all hypersurface immersions have connected domains. For more details, we
refer to \cite{LLS04,LSW96} and the monographs \cite{LSZH15,NS94}.

\subsection{Local theory of centroaffine hypersurfaces}\label{sect:2.1}~

\vskip1mm
Let $\mathbb{R}^{n+1}$ denote the $(n+1)$-dimensional affine space
endowed with the standard flat connection $D$. A non-degenerate
hypersurface immersion $F:M^n\to\mathbb{R}^{n+1}$ is said to be
a {\it centroaffine hypersurface} if, at every $x\in M^n$, the
position vector $F(x)$ is transversal to $F_*(T_xM^n)$. Thus $F$
determines the {\it centroaffine normal} up to orientation. Let
$U,X,Y,Z$ denote tangent vector fields on $M^n$. With the centroaffine
normal $-\varepsilon F$, we can write the Gauss formula as follows:
\begin{equation}\label{eqn:2.1}
D_XF_{*}(Y)=F_{*}(\nabla_XY)+h(X,Y)(-\varepsilon F),
\end{equation}
where $\varepsilon\in\{\pm1\}$. This induces the {\it centroaffine
connection} $\nabla$ and the {\it centroaffine metric} $h$ on $M^n$.
When $h$ is positive definite, the locally strongly convex hypersurface
is called {\it elliptic} or {\it hyperbolic} according to $\varepsilon=1$
or $\varepsilon=-1$, respectively. We will always drop $F_*$ when no
ambiguity can arise.

Denote by $\widehat{\nabla}$ the Levi-Civita connection of the centroaffine
metric $h$. Since both $\widehat{\nabla}$ and $\nabla$ have zero torsion,
the difference tensor $K$, defined by
\begin{equation}\label{eqn:2.2}
K(X,Y)=K_XY:=\nabla_XY-\widehat{\nabla}_XY,
\end{equation}
is symmetric in $X$ and $Y$. It is related to the totally symmetric
{\it cubic form} $C:=\nabla h$ by
\begin{equation}\label{eqn:2.3}
C(X,Y,Z)=-2h(K(X,Y),Z),
\end{equation}
which implies that the operator $K_X$ is symmetric with respect to $h$.
Furthermore, the {\it Tchebychev form} $T^\flat$ and its metric dual,
the {\it Tchebychev vector field} $T$, are defined by
\begin{equation}\label{eqn:2.4}
T^\flat(X)=\tfrac{1}{n}{\rm trace}_h\,K_X,\ \
h(T,X)=T^\flat(X).
\end{equation}
It is worth noting that if $T$ vanishes, or equivalently ${\rm trace}_h
\,K_X=0$ for any $X\in TM^n$, then $M^n$ reduces to a proper affine
hypersphere centered at the origin of $\mathbb{R}^{n+1}$ (cf. p.279 of
\cite{LSZH15}). As the Pick--Berwald theorem says, $K$ or $C$ vanishes
if and only if the centroaffine hypersurface is a hyperquadric centered
at the origin.

Next, in terms of the curvature tensor $\widehat{R}$ of $h$, the Gauss and
Codazzi equations are given by
\begin{align}
&\widehat{R}(X,Y)Z
=\varepsilon(h(Y,Z)X-h(X,Z)Y)-[K_X,K_Y]Z,\label{eqn:2.5}\\
&(\widehat{\nabla}K)(X,Y,Z)
=(\widehat{\nabla}K)(Y,X,Z),\label{eqn:2.6}
\end{align}
where
\begin{align}
&[K_X,K_Y]Z:=K_XK_YZ-K_YK_XZ,\label{eqn:2.7}\\
&\widehat{R}(X,Y)Z:=\widehat{\nabla}_X\widehat{\nabla}_YZ
-\widehat{\nabla}_Y\widehat{\nabla}_XZ-\widehat{\nabla}_{[X,Y]}Z,\label{eqn:2.8}\\
&(\widehat{\nabla}K)(X,Y,Z):=\widehat{\nabla}_XK(Y,Z)
-K(\widehat{\nabla}_XY,Z)-K(Y,\widehat{\nabla}_XZ).\label{eqn:2.9}
\end{align}
For $n\ge2$, contracting the Gauss equation \eqref{eqn:2.5} twice, we
deduce that
\begin{equation}\label{eqn:2.10}
n(n-1)\chi=R=n(n-1)(J+\varepsilon)-n^2\|T\|^2_h,
\end{equation}
where $\|T\|^2_h:=h(T,T)$, $J:=\tfrac{1}{n(n-1)}h(K,K)$ is the {\it
centroaffine Pick invariant}, and $\chi$ and $R$ denote the {\it normalized
scalar curvature} and the {\it scalar curvature} of $h$, respectively.

Moreover, the covariant derivative $\widehat\nabla T$ is central to our
hypotheses. Contracting the Codazzi equation \eqref{eqn:2.6} gives
\begin{equation}\label{eqn:2.11}
h(\widehat{\nabla}_XT,Y)=h(X,\widehat{\nabla}_YT),
\end{equation}
which means precisely that the Tchebychev form $T^\flat$ is closed,
and the endomorphism $\mathcal{T}:TM^n\rightarrow TM^n$, defined by
\begin{equation}\label{eqn:2.12}
\mathcal{T}(X):=\widehat{\nabla}_XT,\ \ X\in TM^n,
\end{equation}
is called the \emph{Tchebychev operator} of $F:M^n\to\mathbb{R}^{n+1}$
(cf. \cite{LW95}). Equation \eqref{eqn:2.11} shows that $\mathcal{T}$
is self-adjoint with respect to $h$. Following Wang~\cite{Wan94}, we
call $\mathcal{T}$ the \emph{centroaffine shape operator}, while $H=(1/n)
\operatorname{tr}_h\mathcal T$ is the \emph{centroaffine mean curvature}.
A centroaffine hypersurface is called \emph{Tchebychev} if $\mathcal{T}
=H\cdot\operatorname{id}$.

To express the Euler--Lagrange equation, we choose local coordinates $(w^1,
\ldots,w^n)$ and write $F_i=\partial F/\partial w^i$. The \emph{Tchebychev
function} is the globally defined positive function
\begin{equation}\label{eqn:2.13}
\psi=\tfrac{\det(h_{ij})}{[F_1,\ldots,F_n,F]^2},
\end{equation}
where $[v_1,\ldots,v_{n+1}]$ denotes the determinant of the indicated
ambient vectors. The ratio in \eqref{eqn:2.13} is independent of the
chosen coordinates. Taking the trace in \eqref{eqn:2.1}  gives
\begin{equation}\label{eqn:2.14}
T=-\tfrac{1}{2n}\operatorname{grad}_h\log\psi,
\end{equation}
and $T^\flat=-(2n)^{-1}d\log\psi$. Set $\Delta_h=\operatorname{tr}_h
\operatorname{Hess}_h$. Taking the divergence in \eqref{eqn:2.14} yields
\begin{equation}\label{eqn:2.15}
nH=\operatorname{tr}_h\mathcal{T}=-\tfrac{1}{2n}\Delta_h\log\psi.
\end{equation}
Accordingly, a centroaffine hypersurface is said to be \emph{centroaffine
extremal} if $H=0$, or equivalently if $\Delta_h\log\psi=0$. This agrees
with Wang's notion of centroaffine minimality.

More explicitly, for a relatively compact domain $\Omega\subset M^n$, the
centroaffine area functional is $\mathcal A_\Omega(F)=\int_\Omega dV_h$.
In the hyperbolic case, the radial variation $F_\tau=(1+\tau\varphi)F$,
with $\varphi\in C_c^\infty(\Omega)$, satisfies $\left.\partial_\tau
h_\tau\right|_{\tau=0}=\operatorname{Hess}_\nabla\varphi$. Integration
by parts shows that
\begin{equation}\label{eqn:2.16}
\left.\tfrac{d}{d\tau}\right|_{\tau=0}\mathcal A_\Omega(F_\tau)
=\tfrac n2\int_\Omega\varphi\,\operatorname{div}_hT\,dV_h
=-\tfrac14\int_\Omega\varphi\,\Delta_h\log\psi\,dV_h.
\end{equation}
Since $F$ is transversal to the tangent space, radial variations detect
the Euler--Lagrange equation; compactly supported tangential variation
fields are generated by reparametrizations. Thus $\Delta_h\log\psi=0$
is precisely the stationarity condition \cite[Theorem~2.1]{LLS04}.

\subsection{Calabi compositions and completeness of centroaffine hypersurfaces}\label{sect:2.2}~

\vskip1mm
In this subsection, we first recall the Calabi composition introduced
in the centroaffine setting by Li--Wang \cite{LW91}, in the parametrized
form used in \cite{CHM17}.

\begin{definition}\label{def:2.1}
Let $\phi_i:M_i^{n_i}\to\mathbb R^{n_i+1}$ be non-degenerate centroaffine
hypersurfaces for $i=1,2$, and let $\lambda\in\mathbb{R}\setminus\{0,-1\}$.
Then the Calabi composition of $\phi_1$ and $\phi_2$, defined by
\begin{equation}\label{eqn:2.17}
\mathcal{C}_\lambda(t,p,q)
=(e^t\phi_1(p),e^{-\lambda t}\phi_2(q)):\mathbb{R}\times
M^{n_1}_1\times M^{n_2}_2\to\mathbb{R}^{n_1+n_2+2},
\end{equation}
is a non-degenerate centroaffine hypersurface, while the Calabi composition
of $\phi_1$ and a point, defined by
\begin{equation}\label{eqn:2.18}
{\widetilde{\mathcal C}_\lambda}(t,p)
=(e^t\phi_1(p),e^{-\lambda t}):\mathbb{R}\times
M^{n_1}_1\to\mathbb{R}^{n_1+2},
\end{equation}
is also a non-degenerate centroaffine hypersurface.
\end{definition}

In the hyperbolic setting considered below, we take $\lambda>0$.
The induced centroaffine metrics and the relevant completeness properties
of the two constructions will be established in Propositions
\ref{pro:4.1} and \ref{pro:5.1}, respectively.

Following \cite{LLS04}, we distinguish two notions of completeness.
A centroaffine hypersurface is \emph{centroaffinely complete} if its
centroaffine metric $h$ is complete, and \emph{Euclidean complete} if
the metric induced by a Euclidean inner product on the ambient space is
complete.  The latter condition is independent of the chosen inner product,
since all Euclidean norms on a finite-dimensional vector space are uniformly
equivalent, as are their pullback metrics.

Finally, we recall that every proper injective immersion into Euclidean
space is an embedding. We shall also use the fact that, for the
one-dimensional factors appearing in Section \ref{sect:4}, formulas
\eqref{eqn:2.13} and \eqref{eqn:2.14} remain valid with $n=1$ and with
$\Delta_h$ denoting the one-dimensional Laplace--Beltrami operator.

\begin{lemma}\label{lem:proper-complete}
Let $F:N^m\to\mathbb R^{m+1}$ be a locally strongly convex hyperbolic
centroaffine immersion, and let $g_F$ denote the metric induced by a
Euclidean inner product on $\mathbb R^{m+1}$. Then $F$ is proper if and
only if $g_F$ is complete.
\end{lemma}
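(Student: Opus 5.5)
The direction ``proper $\Rightarrow$ complete'' is the standard argument and I would do it first. Let $(x_k)$ be a $g_F$-Cauchy sequence. Since $|F(x)-F(y)|\le d_{g_F}(x,y)$, the points $F(x_k)$ lie in a compact ball $B$. Properness makes $F^{-1}(B)$ compact, so a subsequence converges in $N^m$, and hence the Cauchy sequence itself converges. This direction uses neither convexity nor hyperbolicity.

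For ``complete $\Rightarrow$ proper'' I would split by dimension. When $m\ge2$, I would invoke the Hadamard--Sacksteder theorem: a complete immersed hypersurface in $\mathbb R^{m+1}$ with positive definite second fundamental form is embedded and is the boundary of a closed convex set. Local strong convexity gives this condition, because the centroaffine metric $h$ is, up to a nonvanishing factor, the Euclidean second fundamental form. A boundary of a closed convex set is a closed subset, and an embedding onto a closed set is proper. Hyperbolicity is not needed in this case.

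The case $m=1$ is the main obstacle, since complete locally convex curves can spiral without being proper (for example, curves accumulating on a circle). Here I would use hyperbolicity. Transversality $[F',F]\neq0$ makes the polar angle $\theta$ strictly monotone along the curve, so $F$ is a graph $F=u(\theta)^{-1}(\cos\theta,\sin\theta)$ with $u>0$ on an open interval $I$ of angles. Writing the Gauss formula \eqref{eqn:2.1} with $\varepsilon=-1$ in this polar form, I expect to find
\[
h=-\frac{u''+u}{u}\,d\theta^2 ,
\]
up to a positive factor. Hyperbolicity then means $u''+u<0$, so $u''<0$. A Sturm comparison with $\sin(\theta-\theta_0)$ shows that a positive function with $u''+u<0$ lives on an interval of length at most $\pi$, so $I=(\alpha,\beta)$ is bounded. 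Since $u$ is concave, it has limits $u(\alpha^+),u(\beta^+)\ge0$ and finite or one-sided-bounded derivatives near the ends.

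Now suppose $F$ is not proper. Then there is a sequence leaving every compact set of $N^1$ with $F$ bounded. This forces $\theta\to$ an endpoint, say $\beta$, while $r=1/u$ stays bounded, so $u(\beta^-)>0$. Concavity then bounds $|u'|$ near $\beta$, so the Euclidean speed $\sqrt{r^2+r'^2}$ in $\theta$ is bounded on $(\beta-\delta,\beta)$. That end of the curve therefore has finite length, contradicting completeness of $g_F$.

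The delicate point I would double-check is the sign computation linking hyperbolicity to $u''+u<0$, which I would verify on the model $x_1^{a}x_2^{b}=1$. I would also check that a finite-length end really contradicts completeness, using that the end leaves every compact set.
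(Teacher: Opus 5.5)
Your overall plan works, and your ``proper $\Rightarrow$ complete'' direction is exactly the paper's. However, one step in the curve case is false as written.

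From $u''<0$ and $u(\beta^-)>0$ you conclude that $|u'|$ is bounded near $\beta$. Concavity only bounds $u'$ from above there, and $u'\to-\infty$ is possible. The hyperbolic arc $F(x)=(x,1+x^2)$, $|x|<1$, shows this:
\begin{itemize}
\item one checks $F''=\tfrac{2}{1-x^2}F-\tfrac{2x}{1-x^2}F'$, so $h=\tfrac{2}{1-x^2}\,dx^2>0$;
\item as $x\to\pm1$ the tangent becomes radial, so $d\theta/dx=[F,F']/|F|^2=(x^2-1)/|F|^2\to0$, while $d|F|/dx\to\pm\sqrt5$;
\item hence $|u'|\to\infty$ although $u\to1/\sqrt5>0$, and the speed $\sqrt{r^2+r'^2}$ is unbounded.
\end{itemize}
The conclusion you need survives. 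Since $u'$ is decreasing, $u$ is monotone on some $(\beta-\delta,\beta)$, so $r=1/u$ is monotone and bounded there. Therefore
\[
\int_{\beta-\delta}^{\beta}\sqrt{r^2+r'^2}\,d\theta\le\delta\sup_{(\beta-\delta,\beta)}r+\bigl|r(\beta^-)-r(\beta-\delta)\bigr|<\infty,
\]
which is the finite-length divergent end contradicting completeness. You should also set aside the case $N^1\cong S^1$, where properness is trivial, before lifting the angle.

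With this repair your proof is correct, but it takes a genuinely different route from the paper. The paper does not split by dimension. With $\varepsilon=-1$, the Gauss formula gives $B=h_F\,\eta$ for the Euclidean second fundamental form, where $\eta$ is the normal part of $F$. Hence $E=\tfrac12|F|^2$ satisfies $\operatorname{Hess}_{g_F}E=g_F+|\eta|^2h_F\ge g_F$. Integrating along minimizing geodesics gives $E(x)\ge E(x_0)-|\operatorname{grad}_{g_F}E(x_0)|\,r+\tfrac12r^2$ with $r=d_{g_F}(x_0,x)$. So preimages of bounded sets lie in closed metric balls, which are compact by Hopf--Rinow.

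Comparing the two routes:
\begin{itemize}
\item The paper's argument is elementary, self-contained and uniform in $m\ge1$. It is exactly where hyperbolicity is used: for $\varepsilon=1$ the extra term becomes $-|\eta|^2h_F$, consistent with the spiralling curves you mention.
\item Your route buys more information at a higher price. For $m\ge2$ the Hadamard--Sacksteder theorem is deep, but it shows hyperbolicity is superfluous there and gives embeddedness and convexity of the image for free.
\item For $m=1$ your polar analysis additionally shows that the curve subtends an angle at most $\pi$.
\end{itemize}
The paper's single Hessian estimate replaces both your citation and your Sturm-comparison and end analysis.
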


\begin{proof}
Suppose first that $g_F$ is complete. Denote the centroaffine metric of
$F$ by $h_F$, the Euclidean second fundamental form by $B$, and the
Euclidean normal component of the position vector by $\eta$. Taking
normal components in the Gauss formula \eqref{eqn:2.1} with
$\varepsilon=-1$ gives $B(U,V)=h_F(U,V)\eta$. Hence the function
$E=\tfrac12|F|^2$ satisfies
\[
\operatorname{Hess}_{g_F}E(U,V)
=g_F(U,V)+\langle F,B(U,V)\rangle
=g_F(U,V)+|\eta|^2h_F(U,V),
\]
so that $\operatorname{Hess}_{g_F}E\ge g_F$. Fix $x_0\in N$ and write
$r=d_{g_F}(x_0,x)$. Integrating this inequality twice along a unit-speed
minimizing $g_F$-geodesic from $x_0$ to $x$ yields
\[
E(x)\ge E(x_0)
-|\operatorname{grad}_{g_F}E(x_0)|_{g_F}\,r+\tfrac12r^2.
\]
Therefore the inverse image under $F$ of a bounded set lies in a closed
$g_F$-ball, which is compact by the Hopf--Rinow theorem. The inverse image
of a compact set is closed, hence compact, so $F$ is proper.

Conversely, suppose that $F$ is proper. For a piecewise smooth curve
$\sigma$ joining $x,y\in N$, the Euclidean distance between $F(x)$ and
$F(y)$ is at most the Euclidean length of $F\circ\sigma$, which equals
$L_{g_F}(\sigma)$; hence $|F(x)-F(y)|\le d_{g_F}(x,y)$. If $(p_j)$ is a
$g_F$-Cauchy sequence, then $(F(p_j))$ is a Euclidean Cauchy sequence and
so lies in a compact closed ball $\bar B\subset\mathbb R^{m+1}$. Properness
makes $F^{-1}(\bar B)$ compact, so a subsequence $(p_{j_\ell})$ converges
in $N$ to some $p_\infty$. Since $d_{g_F}$ induces the manifold topology,
$d_{g_F}(p_{j_\ell},p_\infty)\to0$, and the Cauchy property then forces
$p_j\to p_\infty$. Thus $g_F$ is complete.
\end{proof}

\section{The two-dimensional case}\label{sect:3}

This section develops the two-dimensional example that initiates the first
family in the introduction. We first explain the geometric choices behind
its construction and then verify its curvature, properness, and two
contrasting completeness properties.

\subsection{Construction of the surface in Example \ref{exa:1.1}}\label{sect:3.1}~

\vskip1mm
For the present purpose, an ordinary Calabi composition is too rigid. Indeed,
composing a centroaffine curve with a point gives a product centroaffine metric
and hence a flat surface. Under the relevant completeness and extremality
assumptions, this construction exactly leads back to Wang's class. It therefore
cannot produce the desired example with positive Gauss curvature. We instead
allow the curve factor to vary along a transverse direction, leading to a
warped product construction.

We begin with a curve whose centroaffine structure
equations are explicit. More precisely, $\gamma(v)=Q(v/\sqrt2)$ satisfies
\begin{equation}\label{eqn:3.1}
\gamma''=\gamma-v\gamma',\ \
[\gamma,\gamma']=\tfrac1{\sqrt2}e^{-v^2/2}.
\end{equation}
The first identity controls the second derivatives in the curve direction,
while the determinant in the second keeps the immersion nondegenerate. Using
this curve, consider
\begin{equation}\label{eqn:3.2}
Z(u,v)=(a(u)\gamma(v),b(u)),\ \  a,b>0.
\end{equation}

We now choose the two functions governing the transverse motion. The
normalization $a'b-ab'=2b$ and the notation $w=-b'/b$ yield $a'+aw=2$.
Differentiating \eqref{eqn:3.2} and using \eqref{eqn:3.1}, we find
\begin{equation}\label{eqn:3.3}
Z_{uu}=-wZ_u-w'Z,\ \
Z_{uv}=\tfrac{a'}a Z_v,\ \
Z_{vv}=\tfrac a2Z_u-vZ_v+\tfrac{aw}2Z.
\end{equation}
Thus the centroaffine metric induced by the transversal $Z$ is $h=-w'\,du^2
+(aw/2)\,dv^2$. We impose $-w'=aw$ such that $h$ is conformal to the flat
metric $du^2+dv^2/2$. With
\begin{equation}\label{eqn:3.4}
\rho(u):=a(u)w(u),
\end{equation}
the metric becomes $h=\rho(du^2+dv^2/2)$. Moreover, $(a-w)'=2$, so a
translation of $u$ allows us to take $a=2u+w$. The remaining equations
are
\begin{equation}\label{eqn:3.5}
w'=-2uw-w^2,\ \  b'=-wb.
\end{equation}
The substitution $w=\Phi'/\Phi$ turns the Riccati equation into $\Phi''+2u
\Phi'=0$. Choosing the positive solution
\begin{equation}\label{eqn:3.6}
\Phi(u)=\int_{-\infty}^u e^{-s^2}\,ds,\ \
w=\tfrac{1}\Phi e^{-u^2},\ \
b=\tfrac1\Phi,\ \  a=2u+w,
\end{equation}
defines all the functions on $\mathbb R$ and gives $\Phi,w,b>0$. Furthermore,
\begin{equation}\label{eqn:3.7}
a\Phi=2u\Phi+e^{-u^2}
=2\int_{-\infty}^u(u-s)e^{-s^2}\,ds>0,
\end{equation}
so $a,\rho>0$. In particular, the determinant $[Z,Z_u,Z_v]=\sqrt2\,ab
e^{-v^2/2}$ does not vanish, and \eqref{eqn:3.2} defines a locally strongly
convex hyperbolic centroaffine surface.

The choice of the conformal factor is also tailored to the extremal equation.
Indeed,
\begin{equation}\label{eqn:3.8}
\psi=\tfrac{w^2}{4b^2}e^{v^2}
=\tfrac14 e^{v^2-2u^2},\ \
\Delta_h=\tfrac1\rho(\partial_u^2+2\partial_v^2).
\end{equation}
The $u$-term and $v$-term in $\log\psi$ contribute with the opposite signs and
cancel under $\Delta_h$. Hence $\Delta_h\log\psi=0$, and the resulting surface
is precisely $F_2$ from Example~\ref{exa:1.1}. The next subsection establishes
its global and curvature properties.

\subsection{Completion of the proof of Theorem \ref{thm:1.1}}\label{sect:3.2}~

\vskip1mm
Put $Z=F_2$. The positivity of $\Phi$ on $\mathbb R$ shows at once that
all functions in \eqref{eqn:1.2} and \eqref{eqn:1.3}, and hence $Z$
itself, are smooth. We verify the remaining assertions in six steps.

\smallskip
\emph{{\bf Step 1}: Show that $F_2$ is a locally strongly convex hyperbolic
centroaffine surface.}

\smallskip
The definitions in \eqref{eqn:1.2} immediately give
\begin{equation}\label{eqn:3.9}
\begin{aligned}
w'&=-aw,\ \ a'=2-aw,\ \ b'=-wb,\\
a''&=(a^2+aw-2)w,\ \ b''=(a+w)wb.
\end{aligned}
\end{equation}
The rescaled curve $\gamma(v)=Q(v/\sqrt2)$ satisfies
\begin{equation}\label{eqn:3.10}
\gamma'=(\sqrt2 f(\tfrac{v}{\sqrt2}),\tfrac1{\sqrt2}),\ \
\gamma''=(e^{-\frac{v^2}{2}},0)=\gamma-v\gamma',\ \
[\gamma,\gamma']=\tfrac1{\sqrt2}e^{-\frac{v^2}{2}}.
\end{equation}
Differentiation of \eqref{eqn:1.3} yields
\begin{equation}\label{eqn:3.11}
Z_u=(a'\gamma,b'),\ \
Z_v=(a\gamma',0).
\end{equation}
Combining these identities, we obtain
\begin{equation}\label{eqn:3.12}
[Z,Z_u,Z_v]
=a(a'b-ab')[\gamma,\gamma']
=\sqrt2\,ab\,e^{-\frac{v^2}{2}}.
\end{equation}
Recall that $\rho=aw$. By \eqref{eqn:3.7}, we have $a>0$. Together with
$w=e^{-u^2}/\Phi>0$, this also guarantees that $\rho>0$ on $\mathbb R$.

Since $b=1/\Phi>0$, the determinant in \eqref{eqn:3.12} never vanishes.
Thus $Z_u,Z_v$, and $Z$ are linearly independent everywhere: $Z$ is an
immersion, and its position vector is transversal to the tangent plane.
Therefore $F_2$ is a centroaffine surface.

A further differentiation of \eqref{eqn:3.11}, using \eqref{eqn:3.9} and
\eqref{eqn:3.10}, leads to
\begin{equation}\label{eqn:3.13}
\begin{aligned}
Z_{uu}&=-wZ_u+\rho Z,\ \ Z_{uv}=\tfrac{a'}{a}Z_v,\\
Z_{vv}&=\tfrac{a}{2}Z_u-vZ_v+\tfrac{\rho}{2}Z.
\end{aligned}
\end{equation}
With $\varepsilon=-1$, comparison of the coefficients of $Z$ in
\eqref{eqn:3.13} with the Gauss formula \eqref{eqn:2.1} identifies
the centroaffine metric as
\begin{equation}\label{eqn:3.14}
h=\rho\,du^2+\tfrac{\rho}{2}\,dv^2.
\end{equation}
The positivity of $\rho$ makes $h$ positive definite. With
$\varepsilon=-1$, this proves that $F_2$ is locally strongly convex and
hyperbolic.

\smallskip
\emph{{\bf Step 2}: Show that $F_2$ is extremal.}

\smallskip
We first compute the two quantities entering the Tchebychev function. From
\eqref{eqn:3.12} and \eqref{eqn:3.14}, they are
\begin{equation}\label{eqn:3.15}
\det(h_{ij})=\tfrac{\rho^2}{2},\ \
[Z_u,Z_v,Z]^2=2a^2b^2e^{-v^2}.
\end{equation}
The definition \eqref{eqn:2.13} takes the form
\begin{equation}\label{eqn:3.16}
\psi=\tfrac{\det(h_{ij})}{[Z_u,Z_v,Z]^2}
=\tfrac14 e^{v^2-2u^2},
\end{equation}
and hence
\begin{equation}\label{eqn:3.17}
\log\psi=-\log4+v^2-2u^2.
\end{equation}
For the metric \eqref{eqn:3.14}, $\sqrt{\det(h_{ij})}(h^{ij})=\operatorname
{diag}(1/\sqrt2,\sqrt2)$. The Laplace--Beltrami formula therefore implies that
\begin{equation}\label{eqn:3.18}
\Delta_h \log\psi=\tfrac1\rho\left((\log\psi)_{uu}+2(\log\psi)_{vv}\right)=0.
\end{equation}
By \eqref{eqn:2.15}, the vanishing of this Laplacian is exactly the
centroaffine extremal equation. This completes the proof of Step 2.

\smallskip
\emph{{\bf Step 3}: Show that the centroaffine metric $h$ has positive Gauss
curvature.}

\smallskip
We shall compute the Gauss curvature by using the difference tensor. The
tangential coefficients in \eqref{eqn:3.13} determine the centroaffine
connection:
\begin{equation}\label{eqn:3.19}
\nabla_{\partial_u}\partial_u=-w\partial_u,\ \
\nabla_{\partial_u}\partial_v=\tfrac{a'}{a}\partial_v,\ \
\nabla_{\partial_v}\partial_v=\tfrac{a}{2}\partial_u-v\partial_v.
\end{equation}
In the same coordinates, the Levi--Civita connection of the metric
\eqref{eqn:3.14} takes the form
\begin{equation}\label{eqn:3.20}
\widehat\nabla_{\partial_u}\partial_u=\tfrac{\rho'}{2\rho}\partial_u,\ \
\widehat\nabla_{\partial_u}\partial_v=\tfrac{\rho'}{2\rho}\partial_v,\ \
\widehat\nabla_{\partial_v}\partial_v=-\tfrac{\rho'}{4\rho}\partial_u.
\end{equation}
Thus the identity $K=\nabla-\widehat\nabla$ in \eqref{eqn:2.2} reads
\begin{align}\label{eqn:3.21}
\begin{split}
K(\partial_u,\partial_u)
&=-(w+\tfrac{\rho'}{2\rho})\partial_u,\\
K(\partial_u,\partial_v)
&=(\tfrac{a'}{a}-\tfrac{\rho'}{2\rho})\partial_v,\\
K(\partial_v,\partial_v)
&=(\tfrac{a}{2}+\tfrac{\rho'}{4\rho})\partial_u-v\partial_v.
\end{split}
\end{align}
Substituting \eqref{eqn:3.14} and \eqref{eqn:3.21} into the Gauss equation
\eqref{eqn:2.5} with $\varepsilon=-1$, we obtain
\begin{align}\label{eqn:3.22}
\begin{split}
\widehat{R}(\partial_u,\partial_v)\partial_v
&=-(h(\partial_v,\partial_v)\partial_u
-h(\partial_u,\partial_v)\partial_v)
-[K_{\partial_u},K_{\partial_v}]\partial_v\\
&=\tfrac{1}{2a^2}(a^2+2-\rho(a^2+1))\partial_u,
\end{split}
\end{align}
Here, the simplification uses $a'=2-\rho$ and $\rho'=\tfrac{\rho}{a}
(2-\rho-a^2)$. Since $\partial_u$ and $\partial_v$ are orthogonal for
$h$, the Gauss curvature is
\begin{equation}\label{eqn:3.23}
K_h=\tfrac{h(\widehat{R}(\partial_u,\partial_v)\partial_v,\partial_u)}
{h(\partial_u,\partial_u)h(\partial_v,\partial_v)}
=\tfrac{a^2+2-\rho(a^2+1)}{a^2\rho}.
\end{equation}
The denominator is positive, so it remains to show that $r(u):=a^2+2
-\rho(a^2+1)>0$ on $\mathbb R$. We first examine the possible zeros of
$r$. Differentiating and using the preceding identities for $a'$ and
$\rho'$, we find
\begin{equation}\label{eqn:3.24}
r'=2a(1-\rho)(2-\rho)
-\tfrac{\rho}{a}(2-\rho-a^2)(1+a^2).
\end{equation}
If $r(u_0)=0$, then, with $a=a(u_0)$,
\begin{equation}\label{eqn:3.25}
\rho(u_0)=\tfrac{a^2+2}{a^2+1}.
\end{equation}
At $u_0$, the expression for $r'$ consequently reduces to
\begin{equation}\label{eqn:3.26}
r'(u_0)=\tfrac{a^5(a^2+3)}{(a^2+1)^2}>0.
\end{equation}

We now determine the behaviour of $r$ at both ends of the real line. As
$u\to+\infty$,
\begin{equation}\label{eqn:3.27}
\Phi\to\sqrt\pi,\ \
w=\tfrac{e^{-u^2}}{\Phi}\to0^+,\ \
a=2u+w\to+\infty,\ \
\rho=aw\to0^+.
\end{equation}
It follows that
\begin{equation}\label{eqn:3.28}
\tfrac{r}{a^2}=1-\rho+\tfrac{2-\rho}{a^2}\to1,\ \
r\to+\infty,\ \ \text{as}\ \ u\to+\infty.
\end{equation}
As $u\to-\infty$, repeated integration by parts in the Gaussian tail
yields
\begin{equation}\label{eqn:3.29}
\Phi(u)=\tfrac{e^{-u^2}}{-2u}
\left(1-\tfrac{1}{2u^2}+\tfrac{3}{4u^4}-\tfrac{15}{8u^6}
+\tfrac{105}{16u^8}+O(u^{-10})\right).
\end{equation}
Using $w=e^{-u^2}/\Phi$, we then expand
\begin{equation}\label{eqn:3.30}
w=-2u-u^{-1}+u^{-3}-\tfrac52u^{-5}+\tfrac{37}{4}u^{-7}+O(u^{-9}).
\end{equation}
The relation $a=2u+w$ shows that
\begin{equation}\label{eqn:3.31}
a=-u^{-1}+u^{-3}-\tfrac52u^{-5}+\tfrac{37}{4}u^{-7}+O(u^{-9}),
\end{equation}
and the product $\rho=aw$ satisfies
\begin{equation}\label{eqn:3.32}
\rho=2-u^{-2}+3u^{-4}-\tfrac{25}{2}u^{-6}+O(u^{-8}).
\end{equation}
Substituting these expansions into the definition of $r$, we arrive at
\begin{equation}\label{eqn:3.33}
r=\tfrac32u^{-6}+O(u^{-8})\to0^+.
\end{equation}
In particular, $r$ is positive for all sufficiently negative $u$. If $r$
had a zero, its zero set would be nonempty, closed, and bounded below, and
hence would have a smallest element $u_0$. Continuity would then force $r>0$
on $(-\infty,u_0)$. This contradicts \eqref{eqn:3.26}, which requires $r<0$
immediately to the left of $u_0$. Therefore $r$ has no zero and remains
positive on $\mathbb R$. Since $a^2\rho>0$, formula \eqref{eqn:3.23}
proves that $K_h>0$.

\smallskip
\emph{{\bf Step 4}: Show that the centroaffine metric $h$ is incomplete.}

\smallskip
We examine the metric along the positive $u$-direction. Since
$\Phi(u)\to\sqrt\pi$, we first have
\begin{equation}\label{eqn:3.34}
w(u)=\tfrac{e^{-u^2}}{\Phi(u)}
\sim\tfrac{1}{\sqrt\pi}e^{-u^2}.
\end{equation}
Since $a(u)=2u+w(u)$ and $w(u)=o(u)$, we have $a(u)\sim2u$. Multiplying
the preceding two asymptotic relations yields
\begin{equation}\label{eqn:3.35}
\rho(u)=a(u)w(u)
\sim\tfrac{2u}{\sqrt\pi}e^{-u^2}.
\end{equation}
Consequently,
\begin{equation}\label{eqn:3.36}
\sqrt{\rho(u)}
\sim\left(\tfrac{2}{\sqrt\pi}\right)^{1/2}
u^{1/2}e^{-u^2/2}.
\end{equation}
The function on the right is integrable at infinity. Hence, for every
$u_0>0$,
\begin{equation}\label{eqn:3.37}
\int_{u_0}^{\infty}\sqrt{\rho(u)}\,du<\infty.
\end{equation}
Now fix $v_0\in\mathbb R$ and consider the ray $\sigma(u)=(u,v_0)$
for $u\ge u_0$. It leaves every compact subset of $\mathbb R^2$, but
its length is finite by \eqref{eqn:3.14} and the preceding estimate:
\begin{equation}\label{eqn:3.38}
L_h(\sigma)
=\int_{u_0}^{\infty}\sqrt{h(\partial_u,\partial_u)}\,du
=\int_{u_0}^{\infty}\sqrt{\rho(u)}\,du<\infty.
\end{equation}
Thus a divergent curve has finite $h$-length. The divergent-curve
criterion, or equivalently the Hopf--Rinow theorem, shows that $h$ is
incomplete.

\smallskip
\emph{{\bf Step 5}: Show that $F_2$ is a proper embedding.}

\smallskip
Let $q(\xi)=e^{-\xi^2}+2\xi f(\xi)$ denote the first component of
$Q(\xi)$. Its derivatives are
\begin{equation}\label{eqn:3.39}
q'(\xi)=2f(\xi),\ \ q''(\xi)=2e^{-\xi^2}>0.
\end{equation}
Because $q''>0$ and $q'(0)=2f(0)=0$, the function $q$ is strictly convex
and has its unique minimum at $0$. Since $q(0)=1$, we have $q(\xi)\ge1$.
The identity $\gamma(v)=(q(v/\sqrt2),v/\sqrt2)$ therefore implies
\begin{equation}\label{eqn:3.40}
|\gamma(v)|^2=q(v/\sqrt2)^2+\tfrac{v^2}{2}
\geq1+\tfrac{v^2}{2}.
\end{equation}

We begin with properness. Let $((u_j,v_j))$ be a sequence in $\mathbb R^2$
for which $(Z(u_j,v_j))$ is bounded, and choose $C>0$ such that
\begin{equation}\label{eqn:3.41}
|Z(u_j,v_j)|\leq C
\end{equation}
for all $j$. Since $Z(u,v)=(a(u)\gamma(v),b(u))$, we have
\begin{equation}\label{eqn:3.42}
b(u_j)\leq C,\ \  a(u_j)|\gamma(v_j)|\leq C.
\end{equation}
As $u\to-\infty$, $\Phi(u)\to0$ and $b(u)=1/\Phi(u)\to\infty$. The first
estimate in \eqref{eqn:3.42} therefore bounds $(u_j)$ from below. At the
positive end, \eqref{eqn:3.40} guarantees that $|\gamma(v_j)|\ge1$, while
$a(u)\sim2u\to\infty$. The second estimate bounds $(u_j)$ from above. Therefore,
all $u_j$ lie in a compact interval $J$. The positive continuous function $a$
has a minimum $\delta>0$ on $J$, so $|\gamma(v_j)|\le C/\delta$ by \eqref{eqn:3.42}.
Equation \eqref{eqn:3.40} then bounds $(v_j)$. Hence $((u_j,v_j))$ is bounded
in $\mathbb R^2$.

Now let $\Omega\subset\mathbb R^3$ be compact. The preceding argument
shows that $Z^{-1}(\Omega)$ is bounded; it is closed because $Z$ is
continuous and $\Omega$ is closed. The Heine--Borel theorem
\cite[Theorem 2.41]{Rud76} makes $Z^{-1}(\Omega)$ compact. Thus $Z$ is
proper.

For injectivity, write $Z=(Z_1,Z_2,Z_3)$. Since $b'=-wb<0$, the third
component $Z_3=b(u)$ determines $u$ uniquely. With $u$ fixed,
$Z_2=a(u)v/\sqrt2$ and $a(u)>0$ determine $v$. Hence $Z$ is injective.
Together with the immersion property from Step~1 and properness just
proved, this shows that $F_2$ is a proper embedding.

\smallskip
\emph{{\bf Step 6}: Show that $F_2$ is Euclidean complete.}

\smallskip
Steps 1 and 5 show that $F_2$ is a proper locally strongly convex
hyperbolic centroaffine immersion, so Lemma \ref{lem:proper-complete}
yields Euclidean completeness.\qed

\section{A $3$-dimensional complete centroaffine extremal hypersurface}\label{sect:4}

\subsection{Calabi composition with two factors}\label{sect:4.1}~

\vskip1mm
The metric, Tchebychev function, and Laplacian formulas below are due to
Li--Li--Simon \cite{LLS04}; see also \cite[Proposition 3.1]{CHM17}
for the metric and flatness. We shall apply them to factors that need not be
extremal, allowing their nonzero constant contributions to cancel, and
establish the global properties needed for our examples.

\begin{proposition}\label{pro:4.1}
Let $\phi_i:M_i^{n_i}\to\mathbb R^{n_i+1}$ be locally strongly convex
hyperbolic centroaffine hypersurfaces for $i=1,2$ and $n_1,n_2\ge1$.
Denote the corresponding  centroaffine metrics and Tchebychev functions
by $h_1,h_2$ and $\psi_1,\psi_2$, respectively. Then, for $\lambda>0$,
the Calabi composition with two factors
\begin{equation}\label{eqn:4.1}
\mathcal{C}_\lambda(t,p,q)
=(e^t\phi_1(p),e^{-\lambda t}\phi_2(q)):\mathbb{R}\times
M^{n_1}_1\times M^{n_2}_2\to\mathbb{R}^{n_1+n_2+2}
\end{equation}
is a locally strongly convex hyperbolic centroaffine hypersurface with
centroaffine metric
\begin{equation}\label{eqn:4.2}
h_{\mathcal C_\lambda}
=\lambda\,dt^2\oplus\tfrac{\lambda}{\lambda+1}h_1
\oplus\tfrac{1}{\lambda+1}h_2.
\end{equation}
Moreover, the following properties hold:
\begin{enumerate}
\item[$(1)$]
$h_{\mathcal C_\lambda}$ is flat if and only if both $h_1$ and $h_2$ are
flat.

\item[$(2)$]
$h_{\mathcal C_\lambda}$ is complete if and only if both $h_1$ and $h_2$
are complete.

\item[$(3)$]
The Tchebychev function is
\begin{equation}\label{eqn:4.3}
\psi_{\mathcal C_\lambda}
=\tfrac{\lambda^{n_1+1}}{(\lambda+1)^{n_1+n_2+2}}
e^{-2\{(n_1+1)-\lambda(n_2+1)\}t}\psi_1\psi_2.
\end{equation}

\item[$(4)$]
Its Laplacian satisfies
\begin{equation}\label{eqn:4.4}
\Delta_{h_{\mathcal C_\lambda}}\log\psi_{\mathcal C_\lambda}
=\tfrac{\lambda+1}{\lambda}\Delta_{h_1}\log\psi_1
+(\lambda+1)\Delta_{h_2}\log\psi_2.
\end{equation}

\item[$(5)$]
$\mathcal C_\lambda$ is centroaffine extremal if and only if there exists
a constant $c\in\mathbb{R}$ such that
\begin{equation}\label{eqn:4.5}
\Delta_{h_1}\log\psi_1=\lambda c,\ \
\Delta_{h_2}\log\psi_2=-c.
\end{equation}

\item[$(6)$]
If $\mathcal C_\lambda$ is an embedding, then both $\phi_1$ and $\phi_2$
are embeddings.

\item[$(7)$]
$\mathcal C_\lambda$ is proper if and only if both $\phi_1$ and $\phi_2$
are proper.

\item[$(8)$]
The Euclidean metric $g_{\mathcal C_\lambda}$ induced by $\mathcal C_\lambda$
is complete if and only if the Euclidean metrics $g_1$ and $g_2$ induced by
$\phi_1$ and $\phi_2$, respectively, are both complete.
\end{enumerate}
\end{proposition}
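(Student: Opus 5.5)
The plan is a direct computation in local coordinates, followed by product arguments for the global statements. Choose coordinates $p=(p^1,\dots,p^{n_1})$ on $M_1$ and $q=(q^1,\dots,q^{n_2})$ on $M_2$. The Gauss formulas of the factors, with $\varepsilon=-1$, read
\[
\partial_{ij}\phi_1=\Gamma^{k}_{ij}\partial_k\phi_1+(h_1)_{ij}\phi_1,\qquad
\partial_{\alpha\beta}\phi_2=\Gamma^{\gamma}_{\alpha\beta}\partial_\gamma\phi_2+(h_2)_{\alpha\beta}\phi_2 .
\]
Write $\mathcal C=\mathcal C_\lambda$. Then $\mathcal C_t=(e^t\phi_1,-\lambda e^{-\lambda t}\phi_2)$, $\mathcal C_{p^i}=(e^t\partial_i\phi_1,0)$ and $\mathcal C_{q^\alpha}=(0,e^{-\lambda t}\partial_\alpha\phi_2)$.

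Transversality and the immersion property follow from the block determinant
\[
[\mathcal C_t,\mathcal C_p,\mathcal C_q,\mathcal C]=\pm(1+\lambda)\,e^{(n_1+1)t-\lambda(n_2+1)t}\,[\partial\phi_1,\phi_1][\partial\phi_2,\phi_2]\neq0 .
\]
Here I use that $\phi_1=\tfrac{1}{1+\lambda}(\lambda\,\cdot\,$first block of $\mathcal C+$ first block of $\mathcal C_t)$, and similarly for $\phi_2$. With these identities, each second derivative splits uniquely along $\mathcal C_t$ and $\mathcal C$:
\[
\mathcal C_{tt}=(1-\lambda)\mathcal C_t+\lambda\mathcal C,\qquad
\mathcal C_{tp^i}=\mathcal C_{p^i},\qquad
\mathcal C_{tq^\alpha}=-\lambda\mathcal C_{q^\alpha},
\]
\[
\mathcal C_{p^ip^j}=\Gamma^k_{ij}\mathcal C_{p^k}+\tfrac{(h_1)_{ij}}{1+\lambda}\bigl(\mathcal C_t+\lambda\mathcal C\bigr),\qquad
\mathcal C_{q^\alpha q^\beta}=\Gamma^\gamma_{\alpha\beta}\mathcal C_{q^\gamma}+\tfrac{(h_2)_{\alpha\beta}}{1+\lambda}\bigl(-\mathcal C_t+\mathcal C\bigr).
\]
Reading off the coefficients of $\mathcal C$ gives \eqref{eqn:4.2}. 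It is positive definite for $\lambda>0$, which proves local strong convexity and hyperbolicity.

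Items (1) and (2) then follow from \eqref{eqn:4.2} being a Riemannian product with constant scalings. For flatness, a product is flat iff every factor is flat, and the $\mathbb R$-factor is flat. For completeness, a product is complete iff every factor is complete, and $\lambda dt^2$ is complete. For (3), I insert $\det h_{\mathcal C}=\lambda(\tfrac{\lambda}{\lambda+1})^{n_1}(\tfrac1{\lambda+1})^{n_2}\det h_1\det h_2$ and the square of the determinant above into \eqref{eqn:2.13}, and then track constants carefully. The constant prefactor is the step where slips are most likely; the $t$-exponent and the factor $\psi_1\psi_2$ come out directly.

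For (4), the Laplacian of a scaled product is the sum of the scaled factor Laplacians:
\[
\Delta_{h_{\mathcal C}}=\lambda^{-1}\partial_t^2+\tfrac{\lambda+1}{\lambda}\Delta_{h_1}+(\lambda+1)\Delta_{h_2}.
\]
The term in $\log\psi_{\mathcal C}$ coming from $t$ is linear in $t$, so $\partial_t^2$ kills it, which gives \eqref{eqn:4.4}. For (5), by \eqref{eqn:2.15} extremality means the right side of \eqref{eqn:4.4} vanishes. Its first summand depends only on $p$ and its second only on $q$, so each must be constant with opposite values. Setting $\Delta_{h_2}\log\psi_2=-c$ forces $\Delta_{h_1}\log\psi_1=\lambda c$, and the converse is immediate.

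For the global items, (6) follows by restricting to the slice $t=0$, $q=q_0$: injectivity of $\mathcal C$ gives injectivity of $\phi_1$. Being an immersion and, I would argue, being an embedding are inherited because $p\mapsto(\phi_1(p),\phi_2(q_0))$ is $\mathcal C$ restricted to a closed slice, and the restriction of an embedding to an embedded submanifold is an embedding. The same argument applies to $\phi_2$.

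For (7), assume first that both factors are proper and let $|\mathcal C|\le C$. Then $e^t|\phi_1|\le C$ and $e^{-\lambda t}|\phi_2|\le C$. The main point, and the main obstacle, is that for a hyperbolic proper factor $|\phi_i|$ is bounded below by a positive constant. I expect this to follow from the fact that the origin is not in the closure of the image, using the Hessian estimate of Lemma \ref{lem:proper-complete}. If that argument fails, I would instead use that the position vector is transversal and apply properness near $0$. Granting the lower bound, $t$ is bounded on both sides. Then $|\phi_1(p)|$ and $|\phi_2(q)|$ are bounded, so properness of the factors confines $p$ and $q$ to compacts.

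Conversely, if $\mathcal C$ is proper, restricting to the closed slice $\{0\}\times M_1\times\{q_0\}$ shows that $\phi_1$ is proper, and similarly for $\phi_2$. Finally, (8) follows from (7) combined with Lemma \ref{lem:proper-complete}, applied to $\mathcal C$ and to each factor.
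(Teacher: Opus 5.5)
Your proposal is correct and follows the paper's proof essentially step by step. It uses the same identities $(e^t\phi_1,0)=\tfrac{1}{1+\lambda}(\mathcal C_t+\lambda\mathcal C)$ and $(0,e^{-\lambda t}\phi_2)=\tfrac{1}{1+\lambda}(\mathcal C-\mathcal C_t)$ to read off \eqref{eqn:4.2}, the same block-determinant and scaled-product computations for (1)--(5), slices for (6) and the converse of (7), and Lemma~\ref{lem:proper-complete} for (8). Two small remarks. First, in (7) the Hessian estimate is unnecessary for $|\phi_i|\ge\delta_i>0$: your fallback (a proper map has closed image, and transversality gives $0\notin\phi_i(M_i)$) is exactly the paper's argument. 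Second, your verbal identity for $\phi_1$ drops the factor $e^t$, although your displayed second-derivative formulas use the correct vector identity.
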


\begin{proof}
For convenience, we write $Z=\mathcal C_\lambda$, and choose local
coordinates $(p^1,\ldots,p^{n_1})$ and $(q^1,\ldots,q^{n_2})$ on $M_1$
and $M_2$, respectively. Setting $\phi_{1,i}=\partial\phi_1/\partial p^i$
and $\phi_{2,\alpha}=\partial\phi_2/\partial q^\alpha$, by differentiating
\eqref{eqn:4.1} we obtain
\begin{equation}\label{eqn:4.6}
Z_t=(e^t\phi_1,-\lambda e^{-\lambda t}\phi_2),\ \
Z_i=(e^t\phi_{1,i},0),\ \
Z_{\bar\alpha}=(0,e^{-\lambda t}\phi_{2,\alpha}),
\end{equation}
and it then follows that
\begin{equation}\label{eqn:4.7}
\begin{split}
&[Z_t,Z_1,\ldots,Z_{n_1},Z_{\bar1},\ldots,Z_{\bar n_2},Z]\\
&=(-1)^{n_1}(\lambda+1)e^{\{(n_1+1)-\lambda(n_2+1)\}t}
[\phi_{1,1},\ldots,\phi_{1,n_1},\phi_1]
[\phi_{2,1},\ldots,\phi_{2,n_2},\phi_2]\ne0.
\end{split}
\end{equation}
This implies that $\mathcal{C}_\lambda:\mathbb{R}\times M^{n_1}_1\times
M^{n_2}_2\to\mathbb{R}^{n_1+n_2+2}$ is a centroaffine hypersurface.

Moreover, according to the following identities
\begin{equation}\label{eqn:4.8}
(e^t\phi_1,0)=\tfrac1{\lambda+1}Z_t+\tfrac\lambda{\lambda+1}Z,\ \
(0,e^{-\lambda t}\phi_2)=-\tfrac1{\lambda+1}Z_t+\tfrac1{\lambda+1}Z,
\end{equation}
we can apply the Gauss formula to calculate that
\begin{equation}\label{eqn:4.9}
\begin{aligned}
D_{\partial_t}Z_t&=(1-\lambda)Z_t+\lambda Z,\ \
D_{\partial_t}Z_i=D_{\partial_i}Z_t=Z_i,\\
D_{\partial_t}Z_{\bar\alpha}
&=D_{\partial_{\bar\alpha}}Z_t=-\lambda Z_{\bar\alpha},\ \
D_{\partial_i}Z_{\bar\alpha}=D_{\partial_{\bar\alpha}}Z_i=0,\\
D_{\partial_i}Z_j
&=Z_*\bigl(\nabla^1_{\partial_i}\partial_j\bigr)
+\tfrac{(h_1)_{ij}}{\lambda+1}Z_t
+\tfrac{\lambda(h_1)_{ij}}{\lambda+1}Z,\\
D_{\partial_{\bar\alpha}}Z_{\bar\beta}
&=Z_*\bigl(\nabla^2_{\partial_\alpha}\partial_\beta\bigr)
-\tfrac{(h_2)_{\alpha\beta}}{\lambda+1}Z_t
+\tfrac{(h_2)_{\alpha\beta}}{\lambda+1}Z,
\end{aligned}
\end{equation}
where $\partial_i:=\partial/\partial p^i$ and $\partial_\alpha:=\partial
/\partial q^\alpha$ are respectively the coordinate vector fields on $M_1$
and $M_2$. Here we also use the same notation $\partial_i$ for its lift to
the product and write $\partial_{\bar\alpha}$ for the lift of $\partial_\alpha$.
In particular, $\nabla^a$ denotes the centroaffine connection induced by
$\phi_a$, and $(h_1)_{ij}:=h_1(\partial_i,\partial_j)$ and $(h_2)_{\alpha
\beta}:=h_2(\partial_\alpha,\partial_\beta)$, where $1\le i,j\le n_1$ and
$1\le\alpha,\beta\le n_2$. Comparing these with \eqref{eqn:2.1}, we prove
\eqref{eqn:4.2}. Since $\lambda>0$, the centroaffine metric $h_{\mathcal
C_\lambda}$ is positive definite, and hence $\mathcal{C}_\lambda$ is
locally strongly convex and hyperbolic.

It also follows from \eqref{eqn:4.2} that the Levi--Civita connection is
the product connection, and hence its curvature is the direct sum of
the factor curvatures, proving (1). Together with \eqref{eqn:4.2} and the
completeness of $(\mathbb{R},\lambda\,dt^2)$, the assertion (2) follows
immediately. Using \eqref{eqn:4.2} again, we easily get
\begin{equation}\label{eqn:4.10}
\det(h_Z)=\tfrac{\lambda^{n_1+1}}
{(\lambda+1)^{n_1+n_2}}\det(h_1)\det(h_2).
\end{equation}
Combining this with the transversal determinant and \eqref{eqn:2.13},
we obtain \eqref{eqn:4.3} which proves (3). Furthermore, it is seen from
\eqref{eqn:4.2} that
\begin{equation}\label{eqn:4.11}
\Delta_{h_Z}=\tfrac1\lambda\partial_t^2
+\tfrac{\lambda+1}{\lambda}\Delta_{h_1}
+(\lambda+1)\Delta_{h_2}.
\end{equation}
The $t$-dependent term in $\log\psi_Z$ is linear, so applying this operator
to \eqref{eqn:4.3} proves \eqref{eqn:4.4}, and hence (4). Next, we set $d_1:
=\Delta_{h_1}\log\psi_1$ and $d_2:=\Delta_{h_2}\log\psi_2$. By means of
\eqref{eqn:2.15} and \eqref{eqn:4.4}, $Z$ is extremal precisely when $\lambda^{-1}
d_1(p)+d_2(q)=0$ on $M_1\times M_2$. Fixing one factor variable at a time
shows that $d_1$ and $d_2$ are constant. Consequently, $d_1=\lambda c$
and $d_2=-c$ for some $c\in\mathbb R$. The converse follows immediately from
\eqref{eqn:4.4}, proving (5).

Fix $p_0\in M_1$ and $q_0\in M_2$, and consider the closed embeddings
\begin{equation}\label{eqn:4.12}
\sigma_1(p)=(0,p,q_0),\ \ \sigma_2(q)=(0,p_0,q)
\end{equation}
of $M_1$ and $M_2$ into $\mathbb R\times M_1\times M_2$, respectively. Their
compositions with $Z$ are
\begin{equation}\label{eqn:4.13}
(Z\circ\sigma_1)(p)=(\phi_1(p),\phi_2(q_0)),\ \
(Z\circ\sigma_2)(q)=(\phi_1(p_0),\phi_2(q)).
\end{equation}
If $Z$ is an embedding, so are these two compositions. Identifying each
slice of the target with the corresponding Euclidean factor, we conclude
that $\phi_1$ and $\phi_2$ are embeddings. This proves (6).

To prove (7), we suppose first that both factors are proper. Since their
images avoid the origin, properness gives constants $\delta_i>0$ such that
$|\phi_i|\ge\delta_i$ on $M_i$, for $i=1,2$. Indeed, a sequence of image
points tending to zero would have a convergent parameter subsequence,
contradicting centroaffine transversality. Let $\Lambda\subset\mathbb
R^{n_1+n_2+2}$ be compact, and choose $C>\max\{1,\delta_1,\delta_2\}$
such that $|Z|\le C$ on $Z^{-1}(\Lambda)$. It follows that
\begin{equation}\label{eqn:4.14}
\delta_1e^t\le C,\ \ \delta_2e^{-\lambda t}\le C,
\end{equation}
and hence
\begin{equation}\label{eqn:4.15}
a:=\tfrac1\lambda\log\tfrac{\delta_2}{C}
\le t\le\log\tfrac{C}{\delta_1}=:b.
\end{equation}
Consequently,
\begin{equation}\label{eqn:4.16}
|\phi_1(p)|\le Ce^{-a},\ \ |\phi_2(q)|\le Ce^{\lambda b}.
\end{equation}
Let $\bar B_1\subset\mathbb R^{n_1+1}$ and $\bar B_2\subset\mathbb R^{n_2+1}$
be the closed balls centred at the origin with these respective radii. Then
\begin{equation}\label{eqn:4.17}
Z^{-1}(\Lambda)\subset
[a,b]\times\phi_1^{-1}(\bar B_1)\times\phi_2^{-1}(\bar B_2).
\end{equation}
The product on the right is compact by the properness of the factors. Since
$Z^{-1}(\Lambda)$ is closed, it is compact as well, and $Z$ is proper. Conversely,
if $Z$ is proper, its restrictions to the two closed slices above are proper.
These restrictions are precisely $\phi_1$ and $\phi_2$ with constant coordinate
blocks attached, so both factors are proper. This proves (7).

Finally, (8) follows from Lemma \ref{lem:proper-complete} applied to
$\phi_1$, $\phi_2$, and $Z$, together with (7).
\end{proof}

\begin{remark}\label{rem:4.1}
If one radial projection $\phi_i/|\phi_i|$ is injective, Proposition
\ref{pro:4.1} (6) becomes an equivalence. Indeed, centroaffine transversality
makes this projection a diffeomorphism onto its image. When both factors are
embeddings, the direction and norm of the corresponding coordinate block
determine its parameter and $t$, and the other parameter is then recovered
continuously.
\end{remark}

\subsection{Centroaffine plane curves with constant centroaffine mean curvature}\label{sect:4.2}~

\vskip1mm
To apply Proposition~\ref{pro:4.1}, we need two centroaffine plane curves
whose values of $\Delta_h\log\psi$ are nonzero constants of opposite sign.
We first classify the three constant
values used below. Put
\begin{equation}\label{eqn:4.20}
f(\xi)=\int_0^\xi e^{-r^2}\,dr.
\end{equation}
The family with $\Delta_h\log\psi=0$ in the proposition below is the
one-dimensional specialization of Wang's class~\cite[Example~3.3]{LLS04}.

\begin{proposition}\label{pro:4.2}
Let $F:I\to\mathbb{R}^2$ be a locally strongly convex hyperbolic
centroaffine curve on an interval $I\subset\mathbb{R}$ satisfying
\begin{equation}\label{eqn:4.21}
\Delta_h\log\psi=c
\end{equation}
for some constant $c$. Then
\begin{enumerate}
\item[$(1)$]
If $c=0$, then $F$ is locally centroaffinely equivalent to
\begin{equation}\label{eqn:4.22}
W_\alpha(w)=(e^w,e^{-\alpha w}),\ \  0<\alpha\leq1,
\end{equation}
whose image is a member of Wang's class:
\begin{equation}\label{eqn:4.23}
x^{\alpha_1}_1x^{\alpha_2}_2=1,\ \ \alpha_1,\alpha_2>0.
\end{equation}

\item[$(2)$]
If $c=-2$, then $F$ is locally centroaffinely equivalent to
\begin{equation}\label{eqn:4.24}
P(u)=(e^{u^2},e^{u^2}f(u)).
\end{equation}

\item[$(3)$]
If $c=2$, then $F$ is locally centroaffinely equivalent to
\begin{equation}\label{eqn:4.25}
Q(v)=(e^{-v^2}+2vf(v),v).
\end{equation}
\end{enumerate}
Here each example is defined on $\mathbb{R}$, is hyperbolic, and is
centroaffinely complete.
\end{proposition}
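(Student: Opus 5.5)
The plan is to reduce the problem to a linear second-order ODE by normalizing the parameter. Let $F:I\to\mathbb R^2$ be hyperbolic and locally strongly convex. We first reparametrize by centroaffine arclength $s$, so that $h=ds^2$. This is possible since $h>0$, and $\Delta_h\log\psi$ is parameter-independent. Writing $'=d/ds$, the Gauss formula \eqref{eqn:2.1} with $\varepsilon=-1$ then reads
\[
F''=\nu(s)F'+F
\]
for some smooth function $\nu$; here $\nu$ is the centroaffine connection coefficient. Since $\det(h)=1$, formula \eqref{eqn:2.13} with $n=1$ gives $\psi=[F',F]^{-2}$. Differentiating the determinant gives $[F',F]'=[F'',F]=\nu[F',F]$. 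Hence $(\log\psi)'=-2\nu$, and
\[
\Delta_h\log\psi=(\log\psi)''=-2\nu'.
\]
So the hypothesis \eqref{eqn:4.21} is equivalent to $\nu(s)=-\tfrac c2 s+\nu_0$, and the curve solves the linear ODE
\[
F''=\bigl(\nu_0-\tfrac c2 s\bigr)F'+F.
\]

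The key structural point is that both coordinate functions of $F$ solve the scalar equation $y''=\nu y'+y$. Because $[F',F]\neq0$, they form a basis of its (two-dimensional) solution space. Consequently, if $(y_1,y_2)$ is any fixed basis, then $F=A(y_1,y_2)$ for some $A\in GL(2,\mathbb R)$, i.e.\ $F$ is centroaffinely equivalent to the model curve $(y_1,y_2)$. Moreover, local existence and uniqueness plus the translation and scaling freedom in $s$ let us normalize constants. We then treat the three cases.

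\emph{Case $c=0$.} Here $\nu\equiv\nu_0$ is constant. The characteristic equation $r^2-\nu_0r-1=0$ has real roots $r_1>0>r_2$ with $r_1r_2=-1$. A basis is $e^{r_1s},e^{r_2s}$. Setting $w=r_1s$ and $\alpha=-r_2/r_1>0$, we obtain $W_\alpha$; swapping the coordinates if necessary gives $\alpha\le1$. The image satisfies $x_1^{\alpha}x_2=1$, which is of the form \eqref{eqn:4.23}.

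\emph{Cases $c=\mp2$.} Here we translate $s$ to get $\nu_0=0$, so $\nu=\pm s$.
\begin{itemize}
\item For $c=-2$ the equation is $y''=sy'+y$. One checks $y_1=e^{s^2/2}$ directly. Reduction of order gives $y_2=e^{s^2/2}\int_0^se^{-r^2/2}dr$. Substituting $u=s/\sqrt2$ gives $y_2=\sqrt2\,e^{u^2}f(u)$, so after a diagonal scaling the model curve is $P$.
\item For $c=2$ the equation is $y''=-sy'+y$. Here $y_1=s$ is a solution. Reduction of order gives the second solution $e^{-s^2/2}+s\int_0^se^{-r^2/2}dr$. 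With $v=s/\sqrt2$ and a diagonal rescaling, this gives $Q$; this is consistent with the identity $\gamma''=\gamma-v\gamma'$ in \eqref{eqn:3.10}.
\end{itemize}

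Finally, for each model we verify the remaining claims directly on all of $\mathbb R$. The maps are smooth. The determinant $[F',F]$ never vanishes, being an exponential of $\int\nu$ up to a constant. The coefficient of $F$ in $F''$ is positive, so each model is hyperbolic. In the models $h=ds^2$, equivalently $h=2\,du^2$ or $h=2\,dv^2$, on the whole real line, which is complete. The main obstacle is bookkeeping rather than ideas. We must keep the sign conventions of \eqref{eqn:2.1} and \eqref{eqn:2.13}, so that $c=-2$ indeed yields $P$ rather than $Q$, and track the factor $\sqrt2$ between the arclength parameter and $u,v$. In particular, we should check that $\Delta_h\log\psi$ for $P$ computed in the $u$-parameter (where $h=2\,du^2$ and $\log\psi$ contains $-2u^2$) equals $-2$. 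If a sign mismatch appears, we would recheck by computing $\psi_P$ directly from $P$.
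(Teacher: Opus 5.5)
Your proposal is correct and follows essentially the same route as the paper. Both arguments normalize to centroaffine arc length, obtain $F''=\nu F'+F$ and $\psi=[F',F]^{-2}$, deduce that $\nu$ is affine in $s$, and then identify the components of $F$ as a fundamental system of the resulting linear ODE in each of the three cases. The differences are cosmetic: you get $\alpha\le1$ by swapping coordinates and reparametrizing with $\tilde w=-\alpha w$ rather than reversing $s$, and you find the second solutions by reduction of order. For $y_1=s$ the reduction-of-order formula is singular at $s=0$, so instead verify directly that $e^{-s^2/2}+s\int_0^s e^{-r^2/2}\,dr$ solves $y''=-sy'+y$ on all of $\mathbb R$; its Wronskian with $s$ is $-e^{-s^2/2}\neq0$.
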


\begin{proof}
After restricting $I\subset\mathbb{R}$ if necessary, we choose a
centroaffine arc-length parameter $s$ such that $h=ds^2$. Then the
Gauss formula \eqref{eqn:2.1} becomes
\begin{equation}\label{eqn:4.26}
F''=a(s)F'+F
\end{equation}
for a smooth function $a$. Set $\omega=[F',F]$. Centroaffine
transversality implies that $\omega\ne0$, and \eqref{eqn:4.26}
yields
\begin{equation}\label{eqn:4.27}
\omega'=[F'',F]=a[F',F]=a\omega.
\end{equation}
For such a plane curve, by \eqref{eqn:2.13} we easily get $\psi=\omega^{-2}$.
Consequently,
\begin{equation}\label{eqn:4.28}
(\log\psi)'=-2a,\ \
\Delta_h\log\psi=(\log\psi)''=-2a'.
\end{equation}
Combining \eqref{eqn:4.21} and \eqref{eqn:4.28}, we obtain
\begin{equation}\label{eqn:4.29}
a(s)=a_0-\tfrac{1}{2}cs
\end{equation}
for some constant $a_0$. Since the Levi--Civita connection of $ds^2$
vanishes in this coordinate, the tangential part of \eqref{eqn:4.26}
shows that $K(\partial_s,\partial_s)=a\partial_s$. Thus \eqref{eqn:2.4}
gives
\begin{equation}\label{eqn:4.30}
T=a\partial_s.
\end{equation}

Suppose first that $c=0$. Then $a=a_0$ is constant. Reversing the
arc-length parameter replaces $a_0$ by $-a_0$, so we may assume that
$a_0=\mu\geq0$. Set
\begin{equation}\label{eqn:4.31}
\delta=\sqrt{\mu^2+4},\ \
p=\tfrac{1}{2}(\delta-\mu),\ \
q=\tfrac{1}{2}(\delta+\mu),\ \
\alpha=\tfrac{p}{q}=p^2\in(0,1].
\end{equation}
Then $pq=1$, $q-p=\mu$, and the roots of the characteristic equation
$r^2-\mu r-1=0$ are $q$ and $-p$. Since both components of $F$ solve
this Gauss equation, there exist constant vectors $A,B\in\mathbb R^2$
such that
\begin{equation}\label{eqn:4.32}
F(s)=Ae^{qs}+Be^{-ps}.
\end{equation}
If $A$ and $B$ were linearly dependent, then both $F$ and $F'$ take
values in a fixed line, which contradicts centroaffine transversality,
i.e., $[F',F]\ne0$. Hence $A$ and $B$ are linearly independent and there
exists $L\in\operatorname{GL}(2,\mathbb{R})$ such that $L(A)=(1,0)$ and
$L(B)=(0,1)$. Applying $L$ to \eqref{eqn:4.32}, we may therefore write
$F(s)=(e^{qs},e^{-ps})$. With $w=qs$ and $\alpha=p/q$, this becomes
$W_\alpha(w)$ in \eqref{eqn:4.22}. Moreover, writing $x_1=e^{qs}$ and
$x_2=e^{-ps}$, we have $x_1^p x_2^q=e^{pqs}e^{-pqs}=1$. Since $\delta
>\mu\geq0$, we have $p>0$. Together with $pq=1$ and $q-p=\mu\geq0$,
this yields $0<p\leq1\leq q$. Hence this is precisely Wang's equation
$x_1^{\alpha_1}x_2^{\alpha_2}=1$ with $(\alpha_1,\alpha_2)=(p,q)\in(0,1]
\times[1,\infty)$. To verify that this normalized range represents the
full Wang's class, we take arbitrary $\alpha_1,\alpha_2>0$. Applying
the centroaffine transformation $(x_1,x_2)\mapsto(x_2,x_1)$ if
necessary, we may assume that $\alpha_1\leq\alpha_2$. Set
\begin{equation}\label{eqn:4.33}
\rho=\sqrt{\alpha_1\alpha_2},\ \
\widehat p=\tfrac{\alpha_1}{\rho},\ \
\widehat q=\tfrac{\alpha_2}{\rho}.
\end{equation}
It follows that
\begin{equation}\label{eqn:4.34}
\widehat p\widehat q=1,\ \
0<\widehat p\leq1\leq\widehat q,\ \
x_1^{\alpha_1}x_2^{\alpha_2}
=(x_1^{\widehat p}x_2^{\widehat q})^\rho.
\end{equation}
Therefore, the above range merely normalizes the exponents and imposes
no restriction on Wang's class. Additionally, \eqref{eqn:4.30} and
\eqref{eqn:4.31} give $\|T\|_h=\mu=(1-\alpha)/\sqrt\alpha$. Since $h$
and $T$ are centroaffine invariants and the function $\alpha\mapsto(1
-\alpha)/\sqrt\alpha$ is strictly decreasing on $(0,1]$, the normalized
parameter $\alpha$ is uniquely determined by the invariant $\|T\|_h$.
In particular, $T=0$ if and only if $\alpha=1$.

Suppose next that $c=-2$. Then $a(s)=a_0+s$, and \eqref{eqn:4.26} is
$F''-(s+a_0)F'-F=0$. Set $t=(s+a_0)/\sqrt2$ and $\widetilde F(t)=F
(\sqrt2t-a_0)$. Similarly, it follows that
\begin{equation}\label{eqn:4.35}
\begin{aligned}
\widetilde F''-2t\widetilde F'-2\widetilde F=0,\ \
\widetilde F(t)=Ae^{t^2}+Be^{t^2}f(t),\ \ A,B\in\mathbb R^2.
\end{aligned}
\end{equation}
Obviously, $e^{t^2}$ and $e^{t^2}f(t)$ form a fundamental system: both
solve the Gauss equation in \eqref{eqn:4.35}, and their Wronskian is
$e^{t^2}\ne0$. Moreover, centroaffine transversality forces $A$ and $B$
to be linearly independent. Consequently, there exists a centroaffine
transformation $L\in\operatorname{GL}(2,\mathbb{R})$ such that $(L\circ
\widetilde F)(t)=(e^{t^2},e^{t^2}f(t))=P(t)$. Based on this, it holds
that
\begin{equation}\label{eqn:4.36}
\begin{aligned}
P'(u)&=(2ue^{u^2},1+2ue^{u^2}f(u)),\\
P''(u)&=2uP'(u)+2P(u),\ \ [P,P']=e^{u^2}.
\end{aligned}
\end{equation}
This verifies centroaffine transversality and proves the assertion (2).

Suppose now that $c=2$. Then $a(s)=a_0-s$, and \eqref{eqn:4.26} is $F''
+(s-a_0)F'-F=0$. By introducing $t=(s-a_0)/\sqrt2$ and $\widetilde F(t)=F
(a_0+\sqrt2t)$, we write the Gauss equation and its general vector-valued
solution as follows:
\begin{equation}\label{eqn:4.37}
\begin{aligned}
\widetilde F''+2t\widetilde F'-2\widetilde F=0,\ \
\widetilde F(t)=At+B(e^{-t^2}+2tf(t)),\ \
A,B\in\mathbb R^2.
\end{aligned}
\end{equation}
The scalar functions $t$ and $e^{-t^2}+2tf(t)$ form a fundamental system of
the Gauss equation in \eqref{eqn:4.37}, because their Wronskian is $-e^{-t^2}
\ne0$. The condition $[\widetilde F',\widetilde F]\ne0$ also implies that $A$
and $B$ are linearly independent. Up to a centroaffine transformation $L\in
\operatorname{GL}(2,\mathbb{R})$, we obtain $(L\circ\widetilde F)(t)=(e^{-t^2}
+2tf(t),t)=Q(t)$. In this case, using the fact $f'(v)=e^{-v^2}$, we further
have
\begin{equation}\label{eqn:4.38}
\begin{aligned}
Q'(v)&=(2f(v),1),\ \ [Q,Q']=e^{-v^2},\\
Q''(v)&=(2e^{-v^2},0)=-2vQ'(v)+2Q(v).
\end{aligned}
\end{equation}
This also verifies centroaffine transversality and proves the assertion (3).

It remains to establish the stated global properties of the three models.
For the Wang curve in \eqref{eqn:4.22}, direct differentiation shows that
\begin{equation}\label{eqn:4.39}
\begin{aligned}
&W_\alpha''=(1-\alpha)W_\alpha'+\alpha W_\alpha,\\
&[W_\alpha,W_\alpha']=-(1+\alpha)e^{(1-\alpha)w}.
\end{aligned}
\end{equation}
Comparing the first identity with the Gauss formula and then applying
\eqref{eqn:2.13}, we obtain
\begin{equation}\label{eqn:4.40}
h_{W_\alpha}=\alpha\,dw^2,\ \
\psi_{W_\alpha}=\tfrac{\alpha}{(1+\alpha)^2}e^{-2(1-\alpha)w},\ \
\Delta_{h_{W_\alpha}}\log\psi_{W_\alpha}=0,
\end{equation}
where the last equality follows from the fact $\Delta_{\alpha\,dw^2}
=\alpha^{-1}d^2/dw^2$.

For the curve $P$, the identities in \eqref{eqn:4.36} and the Gauss formula
yield $h_P=2\,du^2$. Since $[P,P']=e^{u^2}$, we deduce by using \eqref{eqn:2.13}
that
\begin{equation}\label{eqn:4.41}
\psi_P=2e^{-2u^2},\ \
\Delta_{h_P}\log\psi_P
=\tfrac{1}{2}\tfrac{d^2}{du^2}(\log2-2u^2)=-2.
\end{equation}
Similarly, it follows from \eqref{eqn:4.38} that $h_Q=2\,dv^2$ and $[Q,Q']
=e^{-v^2}$. Then there hold
\begin{equation}\label{eqn:4.42}
\psi_Q=2e^{2v^2},\ \
\Delta_{h_Q}\log\psi_Q
=\tfrac{1}{2}\tfrac{d^2}{dv^2}(\log2+2v^2)=2.
\end{equation}
Hence the three metrics are positive definite, so the corresponding curves
are hyperbolic. Moreover, each metric is a constant positive multiple of
the Euclidean metric on $\mathbb R$ and is therefore complete. This proves
all the global assertions.
\end{proof}

\subsection{Completion of the proof of Theorem \ref{thm:1.2}}\label{sect:4.3}~

\vskip1mm
Write $Z=F_3$. We establish the stated properties in the following four
steps.

\smallskip
\emph{{\bf Step 1}: Show that $F_3$ is a flat hyperbolic centroaffine
hypersurface.}

\smallskip
Differentiating \eqref{eqn:1.4} gives
\begin{equation}\label{eqn:4.43}
Z_t=(e^tP,-e^{-t}Q),\ \ Z_u=(e^tP',0),\ \ Z_v=(0,e^{-t}Q'),
\end{equation}
which together with \eqref{eqn:4.36} and \eqref{eqn:4.38} yield
\begin{equation}\label{eqn:4.44}
[P,P']=e^{u^2},\ \ [Q,Q']=e^{-v^2},
\end{equation}
where $[\cdot,\cdot]$ denotes the determinant of the indicated ambient
vectors. It then follows that
\begin{equation}\label{eqn:4.45}
[Z_t,Z_u,Z_v,Z]
=-2[P,P'][Q,Q']
=-2e^{u^2-v^2}\ne0.
\end{equation}
Consequently, $dF_3$ has rank three and the position vector is nowhere
tangent to the image of $dF_3$. Hence $F_3$ is a centroaffine hypersurface.

Moreover, using $P''=2uP'+2P$ and $Q''=-2vQ'+2Q$, by calculation we obtain
\begin{equation}\label{eqn:4.46}
\begin{aligned}
Z_{tt}&=Z,\ \ Z_{tu}=Z_u,\ \ Z_{tv}=-Z_v,\ \ Z_{uv}=0,\\
Z_{uu}&=Z_t+2uZ_u+Z,\ \ Z_{vv}=-Z_t-2vZ_v+Z.
\end{aligned}
\end{equation}
Comparing the coefficient of the position vector in \eqref{eqn:4.46} with
the Gauss formula \eqref{eqn:2.1}, we further have
\begin{equation}\label{eqn:4.47}
h=dt^2+du^2+dv^2,
\end{equation}
and hence the positive definite metric $h$ is flat. This implies that the
hypersurface $F_3$ is locally strongly convex and hyperbolic.

\smallskip
\emph{{\bf Step 2}: Show that $F_3$ is centroaffine extremal and
centroaffine complete.}

\smallskip
Applying \eqref{eqn:4.45} and \eqref{eqn:4.47}, we easily get
\begin{equation}\label{eqn:4.48}
\det(h_{ij})=1,\ \
[Z_t,Z_u,Z_v,Z]^2=4e^{2u^2-2v^2}.
\end{equation}
Substituting these expressions into \eqref{eqn:2.13} yields
\begin{equation}\label{eqn:4.49}
\psi=\tfrac{\det(h_{ij})}{[Z_t,Z_u,Z_v,Z]^2}
=\tfrac{1}{4}e^{-2u^2+2v^2},
\end{equation}
and hence
\begin{equation}\label{eqn:4.50}
\log\psi=-\log4-2u^2+2v^2,\ \
\Delta_h\log\psi=0.
\end{equation}
Together with \eqref{eqn:2.15}, we see that $F_3$ is centroaffine extremal.
Notice from \eqref{eqn:4.47} that $h$ is the standard Euclidean metric on
$\mathbb{R}^3$. Therefore, the centroaffine metric $h$ is complete.

Furthermore, comparison of \eqref{eqn:4.46} with the Gauss formula
immediately gives
\begin{equation}\label{eqn:4.51}
K(\partial_t,\partial_t)=0,\ \
K(\partial_u,\partial_u)=\partial_t+2u\partial_u,\ \
K(\partial_v,\partial_v)=-\partial_t-2v\partial_v,
\end{equation}
where we used the fact that the metric $h$ in \eqref{eqn:4.47} has vanishing
Christoffel symbols. By the total symmetry in \eqref{eqn:2.3}, we take the
$h$-trace in \eqref{eqn:2.4} to obtain
\begin{equation}\label{eqn:4.52}
T=\tfrac{2}{3}(u\partial_u-v\partial_v),
\end{equation}
and then it can be checked from \eqref{eqn:2.12} that
\begin{equation}\label{eqn:4.53}
\mathcal{T}(\partial_t)=0,\ \
\mathcal{T}(\partial_u)=\tfrac{2}{3}\partial_u,\ \
\mathcal{T}(\partial_v)=-\tfrac{2}{3}\partial_v.
\end{equation}

\smallskip
\emph{{\bf Step 3}: Show that $F_3$ is a proper embedding.}

\smallskip
Denote by $q(v)=e^{-v^2}+2vf(v)$ the first component of $Q(v)$ in
\eqref{eqn:4.25}. Since $f(v)$ has the same sign as $v$, and
\begin{equation}\label{eqn:4.54}
q'(v)=2f(v),\ \ q'(0)=0,\ \ q''(v)=2e^{-v^2}>0,
\end{equation}
the function $q(v)$ has a unique global minimum at $v=0$. Moreover,
$f(v)\to\pm\sqrt\pi/2$ as $v\to\pm\infty$. It follows that
\begin{equation}\label{eqn:4.55}
q(v)\ge q(0)=1,\ \ q(v)\to\infty\ \
\text{as}\ \ |v|\to\infty.
\end{equation}
Let now $\Sigma\subset\mathbb R$ be compact. Since $\Sigma$ is
bounded, by \eqref{eqn:4.55} the inverse image $q^{-1}(\Sigma)$
is bounded. It is also closed because $q$ is continuous and
$\Sigma$ is closed. It follows that $q^{-1}(\Sigma)$ is compact
by the Heine--Borel theorem and $q:
\mathbb R\to\mathbb R$ is therefore proper. Equivalently, if
a sequence $(q(v_j))$ is bounded, then $(v_j)$ is bounded.

Write $Z=(Z_1,Z_2,Z_3,Z_4)$. Then the first and third components
in \eqref{eqn:1.4} are positive and satisfy
\begin{equation}\label{eqn:4.56}
Z_1=e^{t+u^2},\ \ Z_3=e^{-t}q(v),\ \ Z_1Z_3=e^{u^2}q(v)\ge1.
\end{equation}
Consequently,
\begin{equation}\label{eqn:4.57}
|Z|^2\ge Z_1^2+Z_3^2\ge2Z_1Z_3=2e^{u^2}q(v)\ge2.
\end{equation}
It is clear that the image $Z(\mathbb R^3)$ stays at Euclidean
distance at least $\sqrt2$ from the origin.

Then, we prove that $Z$ is proper. Suppose that $|Z(t_j,u_j,v_j)|$
is bounded for a sequence $((t_j,u_j,v_j))$ in $\mathbb{R}^3$.
Choose $C>0$ such that
\begin{equation}\label{eqn:4.58}
|Z(t_j,u_j,v_j)|\le C
\end{equation}
for all $j$. The preceding estimate in \eqref{eqn:4.57} gives
\begin{equation}\label{eqn:4.59}
2e^{u_j^2}q(v_j)
\le |Z(t_j,u_j,v_j)|^2\le C^2,
\end{equation}
and hence it follows from \eqref{eqn:4.55} that
\begin{equation}\label{eqn:4.60}
e^{u_j^2}\le e^{u_j^2}q(v_j)\le\tfrac{C^2}{2}.
\end{equation}
Thus $(u_j)$ is bounded. On the other hand, it is also known from
\eqref{eqn:4.59} that
\begin{equation}\label{eqn:4.61}
q(v_j)\le e^{u_j^2}q(v_j)\le\tfrac{C^2}{2}.
\end{equation}
Since $q(v)\to\infty$ as $|v|\to\infty$, the sequence $(v_j)$
is bounded.

Next, we control the remaining parameter. The first component
satisfies
\begin{equation}\label{eqn:4.62}
Z_1(t_j,u_j,v_j)=e^{t_j+u_j^2}\le C,
\end{equation}
so
\begin{equation}\label{eqn:4.63}
t_j\le t_j+u_j^2\le\log C.
\end{equation}
On the other hand, as $q(v_j)\ge1$, the third component shows that
\begin{equation}\label{eqn:4.64}
e^{-t_j}\le e^{-t_j}q(v_j)=Z_3(t_j,u_j,v_j)\le C,
\end{equation}
and thus $t_j\ge-\log C$. Obviously, $(t_j)$ is bounded. Hence
$((t_j,u_j,v_j))$ is bounded in $\mathbb{R}^3$.

Now, let $\Omega\subset\mathbb{R}^4$ be compact. Since $\Omega$
is bounded, the preceding argument shows that $Z^{-1}(\Omega)$
is bounded. Moreover, $Z^{-1}(\Omega)$ is closed because $Z$ is
continuous and $\Omega$ is closed. The Heine--Borel theorem
therefore implies that $Z^{-1}(\Omega)$ is compact in $\mathbb{R}^3$.
Thus $Z=F_3$ is proper.

It remains to verify injectivity. Note from the fact $Z_1,Z_3>0$ that
\begin{equation}\label{eqn:4.65}
\tfrac{Z_2}{Z_1}=f(u),\ \
\tfrac{Z_4}{Z_3}=\tfrac{v}{q(v)}.
\end{equation}
Both functions on the right-hand side are strictly increasing. Indeed,
\begin{equation}\label{eqn:4.66}
f'(u)=e^{-u^2}>0,\ \
(\tfrac{v}{q(v)})'
=\tfrac{q(v)-vq'(v)}{q(v)^2}
=\tfrac{e^{-v^2}}{q(v)^2}>0.
\end{equation}
Let $(t,u,v)$ and $(\tilde t,\tilde u,\tilde v)$ denote two distinct points,
and write $Z_k=Z_k(t,u,v)$ and $\tilde Z_k=Z_k(\tilde t,\tilde u,\tilde v)$.
If $u\ne\tilde u$, then the strict monotonicity of $f$ gives
\begin{equation}\label{eqn:4.67}
\tfrac{Z_2}{Z_1}=f(u)\ne f(\tilde u)
=\tfrac{\tilde Z_2}{\tilde Z_1},
\end{equation}
so the pairs $(Z_1,Z_2)$ and $(\tilde Z_1,\tilde Z_2)$ are different.
If $v\ne\tilde v$, then similarly the pairs $(Z_3,Z_4)$ and $(\tilde Z_3,
\tilde Z_4)$ are different. It remains to consider the case $u=\tilde u$
and $v=\tilde v$. Since the two parameter points are distinct, we have
$t\ne\tilde t$, and hence
\begin{equation}\label{eqn:4.68}
Z_1=e^{t+u^2}\ne e^{\tilde t+\tilde u^2}=\tilde Z_1.
\end{equation}
Consequently, any two distinct parameter points have distinct images,
and $Z$ is injective.

According to Step 1, we see from \eqref{eqn:4.45} that $[Z_t,Z_u,Z_v,Z]
\ne0$ and hence $Z$ is an immersion. We have now proved that $Z$ is both
injective and proper. By the standard fact that an injective proper immersion
into a Hausdorff manifold is an embedding, $Z=F_3$ is a proper embedding.

\smallskip
\emph{{\bf Step 4}: Show that $F_3$ is Euclidean complete.}

\smallskip
Steps 1 and 3 show that $F_3$ is a proper locally strongly convex
hyperbolic centroaffine immersion, so Lemma \ref{lem:proper-complete}
yields Euclidean completeness.\qed

\section{An $n$-dimensional complete centroaffine extremal hypersurface}\label{sect:5}

In this section, we extend $F_2$ and $F_3$ to higher dimensions by iterating the
Calabi composition with a point. After establishing its inheritance properties,
we prove Theorems \ref{thm:1.3} and \ref{thm:1.4} and compare the resulting
families with Wang's class to obtain Corollary \ref{cor:1.1}.

\subsection{Calabi composition of centroaffine hypersurface with a point}\label{sect:5.1}~

\vskip1mm
The metric formula and flatness criterion for the Calabi composition with a
point are recalled from \cite[Proposition~3.2]{CHM17}. We include the Tchebychev
function calculation and the global properties required for the iterations below.

\begin{proposition}\label{pro:5.1}
Let $\phi_1:M_1^{n_1}\to\mathbb{R}^{n_1+1}$ $(n_1\ge1)$ be a locally strongly
convex hyperbolic centroaffine hypersurface with the centroaffine metric $h_1$
and Tchebychev function $\psi_1$. For $\lambda>0$, the Calabi composition with
a point
\begin{equation}\label{eqn:5.1}
\widetilde{\mathcal C}_\lambda(t,p)
=\bigl(e^t\phi_1(p),e^{-\lambda t}\bigr):
\mathbb{R}\times M_1\to\mathbb R^{n_1+2}
\end{equation}
is a locally strongly convex hyperbolic centroaffine hypersurface with
centroaffine metric
\begin{equation}\label{eqn:5.2}
h_{\widetilde{\mathcal C}_\lambda}
=\lambda\,dt^2\oplus\tfrac{\lambda}{\lambda+1}h_1.
\end{equation}
Moreover, the following properties hold:
\begin{enumerate}
\item[$(1)$]
$h_{\widetilde{\mathcal C}_\lambda}$ is flat if and only if $h_1$ is flat.

\item[$(2)$]
$h_{\widetilde{\mathcal C}_\lambda}$ is complete if and only if $h_1$ is
complete.

\item[$(3)$]
The Tchebychev function is
\begin{equation}\label{eqn:5.3}
\psi_{\widetilde{\mathcal C}_\lambda}
=\tfrac{\lambda^{n_1+1}}{(\lambda+1)^{n_1+2}}
e^{-2(n_1+1-\lambda)t}\psi_1.
\end{equation}

\item[$(4)$]
Its Laplacian satisfies
\begin{equation}\label{eqn:5.4}
\Delta_{h_{\widetilde{\mathcal C}_\lambda}}
\log\psi_{\widetilde{\mathcal C}_\lambda}
=\tfrac{\lambda+1}{\lambda}\Delta_{h_1}\log\psi_1.
\end{equation}

\item[$(5)$]
$\widetilde{\mathcal C}_\lambda$ is centroaffine extremal if and only if
$\phi_1$ is centroaffine extremal.

\item[$(6)$]
$\widetilde{\mathcal C}_\lambda$ is an embedding if and only if $\phi_1$
is an embedding.

\item[$(7)$]
$\widetilde{\mathcal C}_\lambda$ is proper if and only if $\phi_1$ is
proper.

\item[$(8)$]
The Euclidean metric $g_{\widetilde{\mathcal C}_\lambda}$ induced by
$\widetilde{\mathcal C}_\lambda$ is complete if and only if the Euclidean
metric $g_1$ induced by $\phi_1$ is complete.
\end{enumerate}
\end{proposition}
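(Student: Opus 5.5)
I would follow the proof of Proposition \ref{pro:4.1} closely, treating the point factor as the "zero-dimensional hypersurface" $\phi_2\equiv1$. Write $Z=\widetilde{\mathcal C}_\lambda$, choose local coordinates $(p^1,\ldots,p^{n_1})$ on $M_1$ and set $\phi_{1,i}=\partial_i\phi_1$. Differentiating \eqref{eqn:5.1} gives
\[
Z_t=(e^t\phi_1,-\lambda e^{-\lambda t}),\qquad Z_i=(e^t\phi_{1,i},0).
\]
Expanding along the last row then yields
\[
[Z_t,Z_1,\ldots,Z_{n_1},Z]=\pm(\lambda+1)e^{(n_1+1-\lambda)t}[\phi_{1,1},\ldots,\phi_{1,n_1},\phi_1]\ne0,
\]
so $Z$ is a centroaffine hypersurface. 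As in \eqref{eqn:4.8}, one has $(e^t\phi_1,0)=\frac{1}{\lambda+1}Z_t+\frac{\lambda}{\lambda+1}Z$. Hence
\[
D_{\partial_t}Z_t=(1-\lambda)Z_t+\lambda Z,\qquad D_{\partial_t}Z_i=Z_i,
\]
\[
D_{\partial_i}Z_j=Z_*(\nabla^1_{\partial_i}\partial_j)+\tfrac{(h_1)_{ij}}{\lambda+1}Z_t+\tfrac{\lambda(h_1)_{ij}}{\lambda+1}Z.
\]
Reading off the $Z$-coefficients with $\varepsilon=-1$ gives \eqref{eqn:5.2}, which is positive definite because $\lambda>0$.

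Items (1)--(5) then follow formally. The metric is a Riemannian product with the complete line $(\mathbb R,\lambda\,dt^2)$, so curvature splits, giving (1), and completeness is equivalent to that of $h_1$, giving (2). For (3), compute
\[
\det h_Z=\lambda\Bigl(\tfrac{\lambda}{\lambda+1}\Bigr)^{n_1}\det h_1
\]
and divide by the squared determinant above. For (4), use
\[
\Delta_{h_Z}=\lambda^{-1}\partial_t^2+\tfrac{\lambda+1}{\lambda}\Delta_{h_1};
\]
the $t$-part of $\log\psi_Z$ is linear and is therefore annihilated. Item (5) is then immediate from \eqref{eqn:2.15}.

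For the global items I would argue as follows.

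\emph{Item (7).} If $\phi_1$ is proper, then $|\phi_1|\ge\delta>0$, by the same argument as in Proposition \ref{pro:4.1}(7). If $|Z|\le C$, then $e^{-\lambda t}\le C$ and $\delta e^t\le C$, so $t$ lies in a compact interval. On that interval $|\phi_1(p)|\le Ce^{-t}$ is bounded, so $p$ ranges in a compact set, and $Z$ is proper. Conversely, the restriction of $Z$ to the closed slice $\{0\}\times M_1$ is $(\phi_1,1)$, which is proper whenever $Z$ is.

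\emph{Item (8).} This is Lemma \ref{lem:proper-complete} applied to $\phi_1$ and to $Z$, combined with (7).

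\emph{Item (6).} This is the only genuinely new point, since Proposition \ref{pro:4.1}(6) gives just one direction. The forward direction (if $Z$ is an embedding, so is $\phi_1$) is the slice argument. For the converse, the last coordinate $e^{-\lambda t}$ determines $t$, and then $\phi_1(p)=e^{-t}\cdot(\text{first block})$ determines $p$ when $\phi_1$ is injective; this gives injectivity of $Z$. To see that $Z$ is a homeomorphism onto its image, write its inverse as
\[
(x,y)\mapsto\bigl(-\lambda^{-1}\log y,\ \phi_1^{-1}(y^{1/\lambda}x)\bigr).
\]
This is continuous, because $\phi_1^{-1}$ is continuous on $\phi_1(M_1)$ when $\phi_1$ is an embedding and the point $y^{1/\lambda}x$ lies in $\phi_1(M_1)$. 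Combined with the immersion property, this shows $Z$ is an embedding.

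I expect the main obstacle to be bookkeeping rather than ideas: getting the exact constants in \eqref{eqn:5.3}, namely the sign and power of $(\lambda+1)$ from the determinant and the exponent $n_1+1-\lambda$, and checking that the inverse map in (6) really lands in $\phi_1(M_1)$ so that continuity transfers.
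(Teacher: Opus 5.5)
Your proposal is correct and follows the paper's proof essentially step by step. It uses the same determinant and Gauss-formula computation for \eqref{eqn:5.2}, the same product-metric arguments for (1)--(5), the same explicit inverse $(x,y)\mapsto(-\lambda^{-1}\log y,\phi_1^{-1}(y^{1/\lambda}x))$ for (6), and the same slice and Lemma~\ref{lem:proper-complete} arguments for (7)--(8). The only cosmetic difference is in (7): you invoke the uniform bound $|\phi_1|\ge\delta>0$ as in Proposition~\ref{pro:4.1}(7), whereas the paper bounds $t_j$ from above by a direct subsequence contradiction, which is the same underlying argument.
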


\begin{proof}
For convenience, we write $Z=\widetilde{\mathcal C}_\lambda$, and choose
local coordinates $(p^1,\ldots,p^{n_1})$ on $M_1$. Putting $\phi_{1,i}=
\partial\phi_1/\partial p^i$, by differentiating \eqref{eqn:5.1} we find
\begin{equation}\label{eqn:5.5}
Z_t=(e^t\phi_1,-\lambda e^{-\lambda t}),\ \
Z_i=(e^t\phi_{1,i},0),
\end{equation}
and hence
\begin{equation}\label{eqn:5.6}
\begin{split}
[Z_t,Z_1,\ldots,Z_{n_1},Z]
=(-1)^{n_1}(\lambda+1)e^{(n_1+1-\lambda)t}
[\phi_{1,1},\ldots,\phi_{1,n_1},\phi_1]\ne0.
\end{split}
\end{equation}
Thus $\widetilde{\mathcal C}_\lambda:\mathbb R\times M_1\to
\mathbb{R}^{n_1+2}$ is a centroaffine hypersurface.

Moreover, applying the identities
\begin{equation}\label{eqn:5.7}
(e^t\phi_1,0)=\tfrac1{\lambda+1}Z_t+\tfrac\lambda{\lambda+1}Z,\ \
(0,e^{-\lambda t})=-\tfrac1{\lambda+1}Z_t+\tfrac1{\lambda+1}Z,
\end{equation}
and the Gauss formula for $\phi_1$, we deduce that
\begin{equation}\label{eqn:5.8}
\begin{aligned}
D_{\partial_t}Z_t&=(1-\lambda)Z_t+\lambda Z,\ \
D_{\partial_t}Z_i=D_{\partial_i}Z_t=Z_i,\\
D_{\partial_i}Z_j
&=Z_*(\nabla^1_{\partial_i}\partial_j)
+\tfrac{(h_1)_{ij}}{\lambda+1}Z_t
+\tfrac{\lambda(h_1)_{ij}}{\lambda+1}Z,
\end{aligned}
\end{equation}
where $\nabla^1$ denotes the centroaffine connection induced by $\phi_1$
and $(h_1)_{ij}:=h_1(\partial_i,\partial_j)$ with $\partial_i:=\partial/
\partial p^i$. Comparing these identities with \eqref{eqn:2.1}, we obtain
the expression in \eqref{eqn:5.2}. Since $\lambda>0$ and $h_1$ is positive
definite, $h_Z$ is positive definite. Hence $\widetilde{\mathcal C}_\lambda$
is locally strongly convex and hyperbolic.

It is known from \eqref{eqn:5.2} that the Levi--Civita connection of
$h_Z$ is the product connection. Its curvature is therefore the direct
sum of the curvature of the line and that of the constant rescaling
$\lambda h_1/(\lambda+1)$. Consequently, $h_Z$ is flat if and only if
$h_1$ is flat, proving the assertion (1). Since
\begin{equation}\label{eqn:5.9}
(\mathbb R\times M_1,h_Z)
=(\mathbb R,\lambda\,dt^2)
\times(M_1,\tfrac{\lambda}{\lambda+1}h_1)
\end{equation}
is a Riemannian product, it is complete if and only if $h_1$ is complete.
This proves the assertion (2). Using \eqref{eqn:5.2}, we further have
\begin{equation}\label{eqn:5.10}
\det(h_Z)=\tfrac{\lambda^{n_1+1}}{(\lambda+1)^{n_1}}\det(h_1).
\end{equation}
Combining this with \eqref{eqn:5.6} and \eqref{eqn:2.13}, we obtain
\begin{equation}\label{eqn:5.11}
\psi_Z
=\tfrac{\lambda^{n_1+1}}{(\lambda+1)^{n_1+2}}
e^{-2(n_1+1-\lambda)t}\psi_1,
\end{equation}
which proves (3). Furthermore, it can be checked from \eqref{eqn:5.2}
that
\begin{equation}\label{eqn:5.12}
\Delta_{h_Z}
=\tfrac1\lambda\partial_t^2
+\tfrac{\lambda+1}{\lambda}\Delta_{h_1}.
\end{equation}
The $t$-dependent term in $\log\psi_Z$ is linear. Applying the preceding
operator to \eqref{eqn:5.3}, we obtain \eqref{eqn:5.4}, proving (4). Since
$(\lambda+1)/\lambda>0$, equations \eqref{eqn:2.15} and \eqref{eqn:5.4} show
that $Z$ is centroaffine extremal if and only if $\phi_1$ is centroaffine
extremal. This proves (5).

Next, we shall establish the global assertions. Write
\begin{equation}\label{eqn:5.13}
Z(t,p)=(X(t,p),y(t)),\ \
X(t,p)=e^t\phi_1(p),\ \
y(t)=e^{-\lambda t}.
\end{equation}
Suppose first that $\phi_1$ is an embedding. If
$Z(t,p)=Z(\widetilde t,\widetilde p)$, then equality of the last coordinates
gives $t=\widetilde t$, while equality of the first $n_1+1$ coordinates gives
$\phi_1(p)=\phi_1(\widetilde p)$. Thus $p=\widetilde p$. Moreover,
the parameters can be recovered continuously from
\begin{equation}\label{eqn:5.14}
t=-\tfrac1\lambda\log y,\ \ p=\phi_1^{-1}(e^{-t}X).
\end{equation}
More precisely, the inverse of $Z$ on its image is
\begin{equation}\label{eqn:5.15}
Z^{-1}(X,y)
=\big(-\tfrac1\lambda\log y,
\phi_1^{-1}(e^{\frac1\lambda\log y}X)\big).
\end{equation}
This inverse is continuous because $y>0$ and $\phi_1^{-1}:\phi_1(M_1)\to
M_1$ is continuous. Since $Z$ is already an immersion, it is an embedding.
Conversely, suppose that $Z$ is an embedding, and consider the inclusion
\begin{equation}\label{eqn:5.16}
\sigma:M_1\to\mathbb{R}\times M_1,\ \
\sigma(p)=(0,p).
\end{equation}
The restriction of $Z$ to this slice is
\begin{equation}\label{eqn:5.17}
(Z\circ\sigma)(p)=(\phi_1(p),1).
\end{equation}
Since $Z$ is an embedding, their composition $Z\circ\sigma:M_1\to\mathbb
R^{n_1+2}$ is an embedding. Identifying $\mathbb R^{n_1+1}\times\{1\}$
with $\mathbb{R}^{n_1+1}$, we see that $\phi_1$ is an embedding. This
proves (6).

Suppose now that $\phi_1$ is proper, and let $(Z(t_j,p_j))$ be a bounded
sequence in $\mathbb R^{n_1+2}$. Choose $C>0$ such that
\begin{equation}\label{eqn:5.18}
|Z(t_j,p_j)|\le C
\end{equation}
for all $j$. Since $Z(t_j,p_j)=(X(t_j,p_j),y(t_j))$, both component
sequences are bounded. In particular,
\begin{equation}\label{eqn:5.19}
e^{-\lambda t_j}=y(t_j)\le C,
\end{equation}
and hence
\begin{equation}\label{eqn:5.20}
t_j\ge-\tfrac1\lambda\log C.
\end{equation}
Consequently, $(t_j)$ is bounded from below. We claim that it is also
bounded from above. Otherwise, after passing to a subsequence, $t_j\to
\infty$. Since $|X(t_j,p_j)|\le C$, we have
\begin{equation}\label{eqn:5.21}
|\phi_1(p_j)|=|X(t_j,p_j)|e^{-t_j}\le Ce^{-t_j}\to 0.
\end{equation}
The sequence $(\phi_1(p_j))$ is therefore contained in a closed ball
$\bar B\subset\mathbb R^{n_1+1}$. Furthermore, this ball is compact,
and the properness of $\phi_1$ implies that $\phi_1^{-1}(\bar B)$ is
compact in $M_1$. Consequently, after passing to a further subsequence,
$p_j\to p_\infty$ for some $p_\infty\in M_1$. By the continuity of
$\phi_1$ and \eqref{eqn:5.21}, we obtain $\phi_1(p_\infty)=0$. This
is impossible, because
\begin{equation}\label{eqn:5.22}
[\phi_{1,1},\ldots,\phi_{1,n_1},\phi_1](p_\infty)=0
\end{equation}
is a contradiction to centroaffine transversality. Hence $(t_j)$
is bounded. It follows from
\begin{equation}\label{eqn:5.23}
\phi_1(p_j)=e^{-t_j}X(t_j,p_j)
\end{equation}
that $(\phi_1(p_j))$ is bounded. Thus it is contained in a compact closed
ball $\bar B_1\subset\mathbb R^{n_1+1}$, and all $p_j$ lie in the compact
set $\phi_1^{-1}(\bar B_1)$. Therefore $(t_j,p_j)$ is contained in a compact
subset of $\mathbb R\times M_1$. Now let $\Omega\subset\mathbb{R}^{n_1+2}$
be compact. Applying the preceding estimates uniformly to the points of
$Z^{-1}(\Omega)$, we obtain a compact interval $I\subset\mathbb R$ and a
compact subset $\Lambda_1\subset M_1$ such that
\begin{equation}\label{eqn:5.24}
Z^{-1}(\Omega)\subset I\times\Lambda_1.
\end{equation}
Moreover, $Z^{-1}(\Omega)$ is closed because $Z$ is continuous and $\Omega$
is closed. It is a closed subset of the compact set $I\times\Lambda_1$, and
hence is compact. Thus $Z$ is proper.

Conversely, suppose that $Z$ is proper, and let $\Lambda\subset
\mathbb{R}^{n_1+1}$ be compact. Then $\Lambda\times\{1\}$ is compact in
$\mathbb R^{n_1+2}$, so $Z^{-1}(\Lambda\times\{1\})$ is compact. If $Z(t,p)
\in \Lambda\times\{1\}$, then the last coordinate gives $e^{-\lambda t}=1$.
Since $\lambda>0$, we obtain $t=0$, and the first $n_1+1$ coordinates then
give $\phi_1(p)\in \Lambda$. Conversely, if $p\in\phi_1^{-1}(\Lambda)$,
then
\begin{equation}\label{eqn:5.25}
Z(0,p)=(\phi_1(p),1)\in \Lambda\times\{1\}.
\end{equation}
It follows that
\begin{equation}\label{eqn:5.26}
Z^{-1}(\Lambda\times\{1\})=\{0\}\times\phi_1^{-1}(\Lambda).
\end{equation}
The natural projection $\{0\}\times M_1\to M_1$ is a homeomorphism,
and hence the compactness of the left-hand side implies that $\phi_1^{-1}
(\Lambda)$ is compact. Since this holds for every compact $\Lambda\subset
\mathbb R^{n_1+1}$, the map $\phi_1$ is proper. This proves (7).

Finally, by Lemma \ref{lem:proper-complete}, both $\phi_1$ and $Z$ are
Euclidean complete if and only if they are proper. Combining this with (7),
we conclude that the Euclidean metric induced by $Z$ is complete if and
only if $g_1$ is complete. This proves (8) for every $\lambda>0$.
\end{proof}

\subsection{Completion of the proof of Theorem \ref{thm:1.3}}\label{sect:5.2}~

\vskip1mm
We argue by induction on $n$. It is seen from Theorem \ref{thm:1.1}
that the base immersion $G_2=F_2$ is a smooth proper embedding and a
locally strongly convex hyperbolic centroaffine extremal surface whose
Euclidean induced metric is complete and whose centroaffine metric is
incomplete. Suppose that $G_{n-1}$ has these properties, and set $G_n
=\widetilde{\mathcal C}_1(G_{n-1})$ as in \eqref{eqn:1.5}. Together with
$\lambda=1$ and $\phi_1=G_{n-1}$, Proposition \ref{pro:5.1} shows that
$G_n$ is again a smooth proper embedding and a locally strongly convex
hyperbolic centroaffine extremal hypersurface. Moreover, the assertions
$(2)$ and $(8)$ preserve the incompleteness of the centroaffine metric
and the completeness of the Euclidean induced metric, respectively.

We now verify that the centroaffine metric $h_n$ induced by $G_n$ has
nonnegative Ricci curvature. By means of \eqref{eqn:5.2}, it can be checked that
\begin{equation}\label{eqn:5.27}
h_n=dr_n^2\oplus\tfrac12h_{n-1}.
\end{equation}
From this, $h_n$ is a Riemannian product of a flat line and a constant
rescaling of $h_{n-1}$. Since $\operatorname{Ric}_{h_2}=K_{h_2}h_2$ and
$K_{h_2}>0$ by Theorem \ref{thm:1.1}, the induction shows that the Ricci
curvature of $h_n$ is nonnegative. This completes the proof of Theorem
\ref{thm:1.3}.\qed

\subsection{Completion of the proof of Theorem \ref{thm:1.4}}\label{sect:5.3}~

\vskip1mm
We shall argue by induction on $n$. According to Theorem \ref{thm:1.2},
the base hypersurface $F_3$ has all the required properties. Suppose
that $F_{n-1}:\mathbb{R}^{n-1}\to\mathbb{R}^n$ is a smooth proper embedding
and a locally strongly convex hyperbolic centroaffine extremal hypersurface
whose centroaffine metric is flat and complete and whose Euclidean induced
metric is complete. Set $F_n=\widetilde{\mathcal C}_1(F_{n-1})$ in \eqref{eqn:1.6}.
Together with $\lambda=1$ and $\phi_1=F_{n-1}$, Proposition \ref{pro:5.1}
shows that $F_n$ is a locally strongly convex hyperbolic centroaffine
hypersurface, and the assertion $(5)$ preserves the centroaffine extremality.
Moreover, the assertions $(1)$--$(2)$ preserve the flatness and completeness
of the centroaffine metric, and the assertions $(6)$--$(7)$ preserve the
embedding and properness. Finally, the Euclidean completeness follows from
the assertion $(8)$. Consequently, $F_n$ has all the stated properties.
Starting from $F_3$ and repeating this argument proves Theorem \ref{thm:1.4}
for every $n\geq4$.\qed

\subsection{Completion of the proof of Corollary \ref{cor:1.1}}\label{sect:5.4}~

\vskip1mm
It remains to rule out the centroaffine equivalence to Wang's class.
Cheng--Hu--Xing \cite[Example 3.1 and Claim 3.1]{CHX22} proved that the
hypersurfaces in \eqref{eqn:1.1} have flat centroaffine metric and parallel
difference tensor. Taking the $h$-trace of $\widehat\nabla K=0$ yields $\mathcal
T=\widehat\nabla T=0$. Flatness and the vanishing of $\mathcal T$ are invariant
under centroaffine equivalence. The family $G_n$, including $G_2=F_2$, is
nonflat: the positive curvature of its two-dimensional factor is preserved
by the constant rescalings in \eqref{eqn:5.27}. This excludes $G_n$ from
Wang's class. The flat family $F_n$ with $n\ge3$ is distinguished instead
by its Tchebychev operator. From the combination of \eqref{eqn:4.50},
\eqref{eqn:5.2} and \eqref{eqn:5.3}, the metric on the original $u,v$
directions is $2^{3-n}(du^2+dv^2)$, while $\log\psi_{F_n}$ retains the
terms $-2u^2+2v^2$ and is affine in the remaining coordinates. According
to \eqref{eqn:2.14}, it holds that
\begin{equation}\label{eqn:5.28}
\mathcal T_{F_n}(\partial_u)=\tfrac{2^{n-2}}{n}\partial_u,\ \
\mathcal T_{F_n}(\partial_v)=-\tfrac{2^{n-2}}{n}\partial_v.
\end{equation}
Thus $\mathcal T_{F_n}\ne0$, so $F_n$ cannot be centroaffinely equivalent
to a member of Wang's class either. Together with
Theorems \ref{thm:1.1}--\ref{thm:1.4}, these observations prove
Corollary \ref{cor:1.1}.\qed

\vskip2mm
\noindent{\bf Acknowledgments.} The authors would like to thank Profs. Zejun Hu, Haizhong Li and
Luc Vrancken for their kind interest in this work and for their encouraging comments on an earlier draft.  
The project was supported by the National Natural Science Foundation of China (Grant No. 12501063), 
the Postdoctoral Fellowship Program of CPSF (Grant No. GZC20252036), China Postdoctoral Science Foundation (Grant No.
2025M783098), and Natural Science Foundation of Henan Province (Grant No. 262300421852). 

\vskip2mm
\noindent{\bf AI assistance statement.} The authors used AI models to assist with proving Theorem 1.1;
the main ideas and all final checks remain the sole responsibility of the human authors.

\vskip2mm
\noindent{\bf Competing interests.}
The authors declare that there are no conflicts of interest.




\begin{thebibliography}{99}

\bibitem{Alm66}
Almgren, F.J., Jr.:
Some interior regularity theorems for minimal surfaces and an extension of
Bernstein's theorem.
Ann. of Math. (2) \textbf{84}, 277--292 (1966)

\bibitem{AC00}
Ambrosio, L., Cabr\'{e}, X.:
Entire solutions of semilinear elliptic equations in $\mathbb R^3$ and a
conjecture of De Giorgi.
J. Amer. Math. Soc. \textbf{13}, 725--739 (2000)

\bibitem{Ber15}
Bernstein, S.:
Sur un th\'eor\`eme de g\'eom\'etrie et son application aux \'equations aux
d\'eriv\'ees partielles du type elliptique.
Comm. Soc. Math. Kharkov (2) \textbf{15}, 38--45 (1915)

\bibitem{BDGG69}
Bombieri, E., De Giorgi, E., Giusti, E.:
Minimal cones and the Bernstein problem.
Invent. Math. \textbf{7}, 243--268 (1969)

\bibitem{Cal82}
Calabi, E.:
Hypersurfaces with maximal affinely invariant area.
Amer. J. Math. \textbf{104}, 91--126 (1982)

\bibitem{Cal88}
Calabi, E.:
Convex affine maximal surfaces.
Results Math. \textbf{13}, 199--223 (1988)

\bibitem{CHM17}
Cheng, X., Hu, Z., Moruz, M.:
Classification of the locally strongly convex centroaffine hypersurfaces with parallel cubic form.
Results Math. \textbf{72}, 419--469 (2017)

\bibitem{CHX22}
Cheng, X., Hu, Z., Xing, C.:
On centroaffine Tchebychev hypersurfaces with constant sectional curvature.
Results Math. \textbf{77}, Paper No.~175, 29 pp. (2022)

\bibitem{Che79}
Chern, S.-S.:
Affine minimal hypersurfaces.
In: Obata, M. (ed.) Minimal Submanifolds and Geodesics, pp. 17--30.
North-Holland, Amsterdam (1979)

\bibitem{ChL24}
Chodosh, O., Li, C.:
Stable minimal hypersurfaces in $\mathbb R^4$.
Acta Math. \textbf{233}, 1--31 (2024)

\bibitem{DG65}
De Giorgi, E.:
Una estensione del teorema di Bernstein.
Ann. Scuola Norm. Sup. Pisa Cl. Sci. (3) \textbf{19}, 79--85 (1965)

\bibitem{DJX24}
Ding, Q., Jost, J., Xin, Y.:
Minimal graphs of arbitrary codimension in Euclidean space with bounded
$2$-dilation.
Math. Ann. \textbf{390}, 3355--3418 (2024)

\bibitem{HLW26} Hong, H., Li, H., Wang, G.M.:
The stable Bernstein theorem in  $\mathbb {R}^7$.  arXiv:2609.15720v1 (2026)

\bibitem{JWZ23}
Jiang, G., Wang, Z., Zhu, J.:
Stability of Edelen--Wang's Bernstein type theorem for the minimal surface
equation. J. Funct. Anal. \textbf{284}, Article 109821 (2023)


\bibitem{JX02}
Jost, J., Xin, Y.-L.:
A Bernstein theorem for special Lagrangian graphs.
Calc. Var. Partial Differential Equations \textbf{15}, 299--312 (2002)

\bibitem{LXZ25}
Lei, M., Xu, R., Zhao, P.:
Classification of the conformally flat centroaffine hypersurfaces with
vanishing centroaffine shape operator.
Chinese Ann. Math. Ser. B \textbf{46}, 163--180 (2025)

\bibitem{LJ01}
Li, A.-M., Jia, F.:
The Calabi conjecture on affine maximal surfaces.
Results Math. \textbf{40}, 265--272 (2001)


\bibitem{LJ09}
Li, A.-M., Jia, F.:
A Bernstein property of some fourth order partial differential equations.
Results Math. \textbf{56}, 109--139 (2009)

\bibitem{LLS04}
Li, A.-M., Li, H., Simon, U.:
Centroaffine Bernstein problems.
Differential Geom. Appl. \textbf{20}, 331--356 (2004)

\bibitem{LLY20}
Li, D., Li, Z., Yuan, Y.:
A Bernstein problem for special Lagrangian equations in exterior domains.
Adv. Math. \textbf{361}, Article 106927 (2020)

\bibitem{LSZH15}
Li, A.-M., Simon, U., Zhao, G., Hu, Z.:
Global Affine Differential Geometry of Hypersurfaces, 2nd ed.
De Gruyter Expositions in Mathematics, vol. 11. Walter de Gruyter,
Berlin/Boston (2015)

\bibitem{LW91}
Li, A.-M., Wang, C.P.:
Canonical centroaffine hypersurfaces in $\mathbb{R}^{n+1}$.
Results Math. \textbf{20}, 660--681 (1991)

\bibitem{LSW96}
Liu, H., Simon, U., Wang, C.P.:
Conformal structures in affine geometry: complete Tchebychev hypersurfaces.
Abh. Math. Semin. Univ. Hambg. \textbf{66}, 249--262 (1996)

\bibitem{LW95}
Liu, H., Wang, C.P.:
The centroaffine Tchebychev operator.
Results Math. \textbf{27}, 77--92 (1995)

\bibitem{MY24}
Mooney, C., Yang, Y.:
The anisotropic Bernstein problem.
Invent. Math. \textbf{235}, 211--232 (2024)

\bibitem{NS94}
Nomizu, K., Sasaki, T.:
Affine Differential Geometry.
Cambridge University Press, Cambridge (1994)

\bibitem{Rud76}
Rudin, W.:
Principles of Mathematical Analysis, 3rd ed.
McGraw--Hill, New York (1976)

\bibitem{Sav09}
Savin, O.:
Regularity of flat level sets in phase transitions.
Ann. of Math. (2) \textbf{169}, 41--78 (2009)

\bibitem{Sim68}
Simons, J.:
Minimal varieties in Riemannian manifolds.
Ann. of Math. (2) \textbf{88}, 62--105 (1968)

\bibitem{SXX26}
Sun, Y., Xing, C., Xu, R.:
Calabi affine maximal surfaces and centroaffine Bernstein problems.
arXiv:2601.10125v2 (2026)

\bibitem{TW00}
Trudinger, N.S., Wang, X.-J.:
The Bernstein problem for affine maximal hypersurfaces.
Invent. Math. \textbf{140}, 399--422 (2000)

\bibitem{Wan94}
Wang, C.P.:
Centroaffine minimal hypersurfaces in $\mathbb{R}^{n+1}$.
Geom. Dedicata \textbf{51}, 63--74 (1994)

\end{thebibliography}
\end{document}